\documentclass[12pt]{article}
\usepackage{xcolor}
\usepackage{algpseudocode}
\usepackage{algorithm}
\usepackage{amssymb}
\usepackage{multicol}
\usepackage{mathtools}
\usepackage{graphicx}
\usepackage[mathscr]{euscript}
\usepackage{amsthm}
\usepackage{verbatim}
\usepackage{algorithmicx}
\usepackage{algorithm}
\usepackage{algpseudocode}
\usepackage{fixltx2e}
\usepackage{hyperref}
\usepackage{amsmath, epsfig}
\usepackage{calc}
\usepackage{tikz}
\usepackage{color}
\usepackage{caption}
\allowdisplaybreaks
\usetikzlibrary{decorations.markings}
\tikzstyle{vertex}=[circle, draw, inner sep=0pt, minimum size=6pt]
\usepackage{tikz-cd}
\usetikzlibrary{matrix,arrows,decorations.pathmorphing}

\usepackage{geometry}
\geometry{margin=1.2in}



\newcommand{\vertex}{\node[vertex]}

\newtheorem{theorem}{Theorem}
\newtheorem{lemma}{Lemma}
\newtheorem{conjecture}{Conjecture}
\newtheorem{proposition}{Proposition}  
\newtheorem{remark}{Remark}  
\newtheorem{corollary}{Corollary}
\newtheorem{definition}{Definition}

\newtheorem{claim}{Claim}

\newcommand{\vol}{\mathrm{vol}}

\title{The maximum relaxation time of a random walk}

\author{ Sinan G. Aksoy \thanks{Pacific Northwest National Laboratory, Richland, WA,
({\tt sinanaksoy90@gmail.com}).}
\and Fan Chung \thanks{Department of Mathematics, University of California, San Diego,
({\tt fan@ucsd.edu}).}
\and Michael Tait \thanks{Department of Mathematical Sciences, Carnegie Mellon University, ({\tt mtait@cmu.edu}). Research is partially supported by NSF grant DMS-1606350.}
\and Josh Tobin \thanks{Department of Mathematics, University of California, San Diego
({\tt rjtobin@ucsd.edu}).}
}

\date{}
\begin{document}
\maketitle
\abstract{We show the minimum spectral gap of the normalized Laplacian over all simple, connected graphs on $n$ vertices is $(1+o(1))\tfrac{54}{n^3}$. This minimum is achieved asymptotically by a double kite graph. Consequently, this leads to sharp upper bounds for the maximum relaxation time of a random walk,  settling  a conjecture of Aldous and Fill. We also improve an eigenvalue-diameter inequality by giving a new lower bound for the spectral gap  of the normalized Laplacian. This eigenvalue lower bound is asymptotically best possible. \\

{\noindent \it Keywords:} spectral graph theory; extremal graph theory; random walks on graphs; Markov chains.}

\section{Introduction}

Graph eigenvalues play a powerful role in the study of random walks. In particular, eigenvalues are a primary tool for bounding a number of key random walk parameters, such as mixing time. Consequently, bounds on graph eigenvalues are not only of interest in themselves, but also may have immediate implications for the behavior of the random walk (for a survey, see \cite{lovasz1993random}). In the case of the {\it relaxation time} of a discrete reversible Markov chain, eigenvalues themselves define the quantity of interest. 

In this paper, we examine an extremal problem concerning the normalized Laplacian spectral gap, the reciprocal of which defines the relaxation time of a random walk. The normalized Laplacian matrix $\mathcal{L}$ of a graph $G$ is 
\begin{align*}
\mathcal{L}&= I-T^{-1/2}AT^{-1/2},
\end{align*}
where $T$ denotes the diagonal degree matrix with $(u,u)$ entry equal to $d(u)$ and $A$ denotes the adjacency matrix. Throughout, we assume $G$ is simple, meaning $G$ has no loops or multiple edges. We write the eigenvalues of $\mathcal{L}$ in increasing order, where
\[
0=\lambda_0\leq\lambda_1\leq\dots\leq\lambda_{n-1} \leq 2.
\]
It is well-known (c.f. \cite{chung1997spectral}) that the second eigenvalue or spectral gap of $\mathcal{L}$ is nonzero if and only if $G$ is connected, and can be characterized as
\[
\lambda_1 =  \inf_{\substack{f \\ \sum_{u}f(u)d(u)=0}}\frac{\displaystyle\sum_{u \sim v} (f(u)-f(v))^2}{\displaystyle\sum_{v}f(v)^2 d(v)},
\]
with corresponding eigenvector $g=T^{1/2}f$.  We call the nontrivial function $f$ achieving the above infimum the {\it harmonic eigenfunction} of $\mathcal{L}$. Landau and Odlyzko proved the following lower bound on $\lambda_1$.

\begin{theorem}[Landau, Odlyzko \cite{Land81}] \label{thm:land}
For a connected graph on $n$ vertices with maximum degree $\Delta$ and diameter $D$, we have
\[
\lambda_1 \geq \frac{1}{n \Delta (D+1)}.
\]
\end{theorem}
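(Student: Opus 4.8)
The plan is to use the variational characterization of $\lambda_1$ directly: it suffices to exhibit a single test function $f$ with $\sum_u f(u) d(u) = 0$ for which the Rayleigh quotient
\[
\frac{\sum_{u \sim v} (f(u) - f(v))^2}{\sum_v f(v)^2 d(v)}
\]
is at least $\frac{1}{n\Delta(D+1)}$ — no, that is backwards; since $\lambda_1$ is an infimum we instead must bound the quotient \emph{from below for every} admissible $f$. So the real strategy is: take the harmonic eigenfunction $f$ achieving the infimum, and bound its Rayleigh quotient below. Let $x$ be a vertex where $|f|$ is maximized; WLOG normalize so that $f(x) = \max_u f(u) > 0$ (using that $f$ changes sign, which follows from $\sum_u f(u)d(u)=0$, so $|f(x)|$ can be taken to be the max of $f$ itself, or we track both the positive max and negative min). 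The denominator $\sum_v f(v)^2 d(v) \le f(x)^2 \sum_v d(v) \le n\Delta \cdot f(x)^2$ — actually $\sum_v d(v) = 2|E| \le n\Delta$, giving denominator $\le n\Delta f(x)^2$.

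For the numerator, the key step is to find a vertex $y$ with $f(y) \le 0$ (exists since the weighted sum is zero and $f(x) > 0$, unless $f \equiv 0$). Take a shortest path $x = v_0, v_1, \dots, v_k = y$ with $k \le D$. Then by Cauchy–Schwarz along the path,
\[
f(x)^2 = \bigl(f(x) - f(y) + f(y)\bigr)^2 \le \bigl(f(v_0) - f(v_k)\bigr)^2 = \left( \sum_{i=0}^{k-1} \bigl(f(v_i) - f(v_{i+1})\bigr) \right)^2 \le k \sum_{i=0}^{k-1} \bigl(f(v_i) - f(v_{i+1})\bigr)^2,
\]
where I used $f(x) \ge f(x) - f(y) \ge 0$. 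Since $k \le D \le D+1$ and each edge $(v_i, v_{i+1})$ appears in the full edge-sum of the numerator, we get $\sum_{u\sim v}(f(u)-f(v))^2 \ge \frac{f(x)^2}{D}$, hence $\ge \frac{f(x)^2}{D+1}$. (The slightly wasteful $D+1$ rather than $D$ presumably comes from a uniform treatment; I would first prove it with $D$ and then note the stated bound is weaker.) Combining numerator and denominator bounds yields $\lambda_1 \ge \frac{f(x)^2/(D+1)}{n\Delta f(x)^2} = \frac{1}{n\Delta(D+1)}$.

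The main obstacle — really the only subtlety — is handling the sign structure cleanly: one must ensure there is a vertex $y$ with $f(y) \le 0$ and a vertex $x$ with $f(x) = \max|f| > 0$ simultaneously, which is exactly where the orthogonality condition $\sum_u f(u)d(u) = 0$ (equivalently $g \perp T^{1/2}\mathbf{1}$) is used, together with the assumption that $f$ is nontrivial. One should also double-check the Cauchy–Schwarz step is applied with the correct quantity ($f(x)$ versus $f(x) - f(y)$); bounding $f(x)^2 \le (f(x) - f(y))^2$ requires $f(x) \ge -f(y) \ge 0$ wait that needs $f(y) \le 0$ and $f(x) \ge |f(y)|$ — since $f(x)$ is the positive max and $|f(x)| \ge |f(y)|$ this holds. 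Everything else is the routine chain of inequalities sketched above, so I would present it in the order: (1) variational setup and normalization, (2) existence of $x, y$ with the right signs, (3) path + Cauchy–Schwarz for the numerator lower bound, (4) trivial upper bound on the denominator, (5) combine.
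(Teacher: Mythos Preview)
The paper does not actually prove Theorem~\ref{thm:land}; it is simply quoted as a known result of Landau and Odlyzko with a citation to \cite{Land81}, so there is no ``paper's own proof'' to compare against. That said, your sketch is a correct proof and in fact yields the slightly stronger bound $\lambda_1 \ge \tfrac{1}{n\Delta D}$, as you noticed. One small clean-up: in your sign discussion you worry that $f(x)^2 \le (f(x)-f(y))^2$ requires $f(x) \ge |f(y)|$, but it does not---once $f(x)>0$ and $f(y)\le 0$ you have $f(x)-f(y) = f(x)+|f(y)| \ge f(x)>0$ automatically, so the inequality is immediate without invoking that $x$ maximizes $|f|$. (You \emph{do} need $x$ to maximize $|f|$ for the denominator bound $\sum_v f(v)^2 d(v) \le f(x)^2 \cdot n\Delta$, and that is where the WLOG negation of $f$ is used.)

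It is worth noting that your argument is the same skeleton the paper uses to prove its own Lemma~\ref{lowerBoundLemma} and hence Theorem~\ref{fanImprovement}: restrict the numerator to a shortest path between the extremal vertices of $f$ and apply Cauchy--Schwarz. The difference is only in how the denominator is controlled---you use the crude bound $\sum_v f(v)^2 d(v) \le (\max|f|)^2 \cdot n\Delta$, while the paper solves a constrained optimization (Proposition~\ref{optimization}) to get a sharper bound in terms of $\mathrm{vol}_P$ and $\mathrm{vol}_N$, which is what upgrades the constant to the asymptotically tight one.
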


In \cite{chung1997spectral}, Chung gives an improved lower bound on $\lambda_1$ in terms of the graph's diameter and volume, where $\vol(G)= \sum_{u \in V(G)} d(u)$.
\begin{theorem}[Chung \cite{chung1997spectral}] \label{fansBound}
For a connected graph $G$ with diameter $D$, we have
\[
\lambda_1 \geq \displaystyle \frac{1}{D \cdot \vol (G)}.
\]
\end{theorem}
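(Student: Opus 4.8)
The plan is to work directly from the variational characterization of $\lambda_1$ quoted above. Since $\lambda_1 = \inf_f R(f)$ with $R(f) = \frac{\sum_{u\sim v}(f(u)-f(v))^2}{\sum_v f(v)^2 d(v)}$, the infimum ranging over all $f\not\equiv 0$ with $\sum_u f(u)d(u)=0$, it suffices to show $R(f)\geq \frac{1}{D\cdot\vol(G)}$ for every such $f$; one does not even need to invoke the harmonic eigenfunction. The engine of the argument is a discrete Poincar\'e-type inequality: the constraint $\sum_u f(u)d(u)=0$ forces $f$ to change sign, and any sign change must be ``paid for'' by Dirichlet energy accumulated along a path of length at most $D$.

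In detail, first observe that if $f\not\equiv 0$ satisfies $\sum_u f(u)d(u)=0$, then $f$ is strictly positive at some vertex and strictly negative at another (otherwise $\sum_u f(u)d(u)$ would be a nonzero one-signed sum, using $d(u)\geq 1$ since $G$ is connected on at least two vertices). Let $a$ be a vertex maximizing $|f(a)|$, put $M=|f(a)|>0$, and pick a vertex $b$ where $f$ has the opposite sign to $f(a)$, so that $|f(a)-f(b)|\geq M$. Next fix a shortest path $a=v_0\sim v_1\sim\cdots\sim v_k=b$ with $k\leq D$. Telescoping and the triangle inequality give $M\leq |f(a)-f(b)|\leq \sum_{i=0}^{k-1}|f(v_i)-f(v_{i+1})|$, so by Cauchy--Schwarz
\[
M^2 \;\leq\; k\sum_{i=0}^{k-1}\bigl(f(v_i)-f(v_{i+1})\bigr)^2 \;\leq\; D\sum_{u\sim v}\bigl(f(u)-f(v)\bigr)^2,
\]
the last step because the edges of the path are distinct edges of $G$. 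Hence the numerator of $R(f)$ is at least $M^2/D$. For the denominator, bound crudely $\sum_v f(v)^2 d(v)\leq M^2\sum_v d(v)=M^2\vol(G)$. Combining the two estimates gives $R(f)\geq \frac{M^2/D}{M^2\vol(G)}=\frac{1}{D\cdot\vol(G)}$, and taking the infimum over $f$ finishes the proof.

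The argument is short, and the only delicate points are bookkeeping: one must take $a$ to be where $|f|$ (not $f$) is largest and pair it with a vertex of opposite sign, so that the single quantity $|f(a)-f(b)|\geq M$ controls both the numerator lower bound and, via $M$, the denominator upper bound; and one must respect the edge-counting convention in $\sum_{u\sim v}$ so that restricting to path edges only decreases the energy. I do not expect a genuine obstacle here. The one loose step is the denominator estimate $\sum_v f(v)^2 d(v)\leq M^2\vol(G)$, but sharpening it is unnecessary for this statement — and indeed the later sections, which pin down the true extremal graphs, are precisely where that slack has to be recovered.
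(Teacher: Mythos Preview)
Your proof is correct. Note, however, that the paper does not actually give its own proof of this statement: Theorem~\ref{fansBound} is quoted as a known result of Chung, and the paper's effort goes into the sharper Theorem~\ref{fanImprovement}. Your argument is essentially the standard one and shares its skeleton with the paper's proof of Theorem~\ref{fanImprovement}: restrict the Dirichlet energy to a shortest path between extremal vertices and apply Cauchy--Schwarz. The difference is in how the ratio is controlled afterward. You take $M=\max|f|$ and use the crude pair of bounds $(f(a)-f(b))^2\geq M^2$ and $\sum_v f(v)^2 d(v)\leq M^2\vol(G)$, which cancel to give the constant $1$. The paper instead normalizes $\sum_v f(v)^2 d(v)=1$ and invokes Proposition~\ref{optimization} to bound $(f(u)-f(v))^2$ from below by an expression in $\vol_P$ and $\vol_N$; combined with AM--GM this recovers an extra factor of~$4$. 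Your closing remark correctly identifies the denominator estimate as the place where the slack lives, though the paper's refinement is phrased as a sharper \emph{numerator} bound under the normalization $\sum f^2 d=1$ rather than a sharper denominator bound.
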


For symmetrical graphs, stronger lower bounds may be obtained. For example, Chung showed that for a vertex-transitive graph with degree $k$ and diameter $D$, we have
\[
\lambda_1 \geq \frac{1}{kD^2}.
\]

In this paper, we have two main results. First, we improve the constant in the statement of Theorem \ref{fansBound}.

\begin{theorem}\label{fanImprovement}
For a connected graph $G$ with diameter $D$, we have
\[
\lambda_1 \geq \frac{4}{D \cdot \vol({G})}.
\]
\end{theorem}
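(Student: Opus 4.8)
The plan is to show that every function $f$ with $\sum_v f(v)d(v)=0$ and $f\not\equiv 0$ has Rayleigh quotient $N_f/M_f\ge 4/(D\vol(G))$, where $N_f=\sum_{u\sim v}(f(u)-f(v))^2$ and $M_f=\sum_v f(v)^2 d(v)$; since $\lambda_1$ is the infimum of these quotients this proves the theorem, and we do not even need $f$ to be the harmonic eigenfunction. The mean-zero condition forces $a:=\max_v f(v)>0>\min_v f(v)=:-b$; fix vertices $v_+,v_-$ attaining these values and a shortest $v_+$--$v_-$ path $\Pi$, of length $L\le D$.

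The first idea is a \emph{threshold splitting}. For a real parameter $c$ set $f_c^{+}=\max(f-c,0)$ and $f_c^{-}=\max(c-f,0)$, so that $f-c=f_c^{+}-f_c^{-}$ and $f_c^{+}f_c^{-}\equiv 0$. A short case check on the signs of $\alpha-c$ and $\beta-c$ gives the pointwise inequality $(\alpha-\beta)^2\ge\big((\alpha-c)_+-(\beta-c)_+\big)^2+\big((c-\alpha)_+-(c-\beta)_+\big)^2$, so summing over edges yields $N_f\ge N_c^{+}+N_c^{-}$ with $N_c^{\pm}=\sum_{u\sim v}(f_c^{\pm}(u)-f_c^{\pm}(v))^2$; and since $(f(v)-c)^2=f_c^{+}(v)^2+f_c^{-}(v)^2$, expanding and using the mean-zero condition gives $M_c^{+}+M_c^{-}=\sum_v(f(v)-c)^2d(v)=M_f+c^2\vol(G)\ge M_f$ where $M_c^{\pm}=\sum_v f_c^{\pm}(v)^2 d(v)$. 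Hence $N_f/M_f\ge \min\big(N_c^{+}/M_c^{+},\,N_c^{-}/M_c^{-}\big)$. Now $f_c^{+}$ equals $a-c$ at $v_+$ and vanishes on $\{f\le c\}$, so a Cauchy--Schwarz estimate along a shortest path from $v_+$ to that set gives $N_c^{+}\ge(a-c)^2/D^{+}(c)$ with $D^{+}(c):=\mathrm{dist}(v_+,\{f\le c\})\le D$, while $M_c^{+}\le(a-c)^2V_{+}(c)$ with $V_{+}(c):=\vol(\{f>c\})$; thus $N_c^{+}/M_c^{+}\ge 1/(D^{+}(c)V_{+}(c))$, and symmetrically for the minus branch with $D^{-}(c)=\mathrm{dist}(v_-,\{f\ge c\})$ and $V_{-}(c)=\vol(\{f<c\})$. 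Combining, $\lambda_1\ge 1/\max\{D^{+}(c)V_{+}(c),\,D^{-}(c)V_{-}(c)\}$ for every admissible $c$.

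It remains to choose $c$ well. One always has $V_{+}(c)+V_{-}(c)\le\vol(G)$, and if $c$ is an $f$-value attained at a vertex of the geodesic $\Pi$ then $D^{+}(c)+D^{-}(c)\le L\le D$. If moreover $c$ can be chosen so that the two products are balanced, $D^{+}(c)V_{+}(c)=D^{-}(c)V_{-}(c)$, then AM--GM closes the argument:
\[
\big(D^{+}(c)V_{+}(c)\big)^2=\big(D^{+}(c)D^{-}(c)\big)\big(V_{+}(c)V_{-}(c)\big)\le\Big(\tfrac{D^{+}(c)+D^{-}(c)}{2}\Big)^{2}\Big(\tfrac{V_{+}(c)+V_{-}(c)}{2}\Big)^{2}\le\frac{D^{2}\vol(G)^{2}}{16},
\]
whence $\max\{D^{+}V_{+},D^{-}V_{-}\}\le D\vol(G)/4$ and $\lambda_1\ge 4/(D\vol(G))$. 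A balancing $c$ should exist because $c\mapsto D^{+}(c)V_{+}(c)$ is nonincreasing and $c\mapsto D^{-}(c)V_{-}(c)$ is nondecreasing, so their difference changes sign.

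The main obstacle is reconciling the two demands on $c$: the sign change of $D^{+}V_{+}-D^{-}V_{-}$ may fall at a value of $f$ not realized on $\Pi$, where only the weaker $D^{+}(c)+D^{-}(c)\le D+1$ is guaranteed. I would handle this by restricting $c$ to the finitely many $f$-values occurring along $\Pi$ (each of which does satisfy $D^{+}(c)+D^{-}(c)\le D$), locating among them the crossover, and checking the two adjacent thresholds $t_s>t_{s+1}$: at each of them $D^{+}D^{-}V_{+}V_{-}\le D^2\vol(G)^2/16$, and combining this with the monotonicity of the two products --- together with the fact that a large jump of $f$ across the gap $(t_{s+1},t_s)$ contributes its square to $N_f$ --- should force one of $t_s,t_{s+1}$ to already give $\max\{D^{+}V_{+},D^{-}V_{-}\}\le D\vol(G)/4$. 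Pinning down this comparison, and disposing of the degenerate cases (no sign change, or $D$ very small) by direct estimates, is where the care is needed.
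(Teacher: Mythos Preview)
Your threshold-splitting route is not the paper's approach; the paper instead applies Cauchy--Schwarz once along a shortest $v_+$--$v_-$ path to get $\lambda_1\ge (f(v_+)-f(v_-))^2/\mathrm{dist}(v_+,v_-)$ (after normalizing $\sum_x f(x)^2 d(x)=1$), and then lower-bounds $(f(v_+)-f(v_-))^2$ by a separate constrained-optimization lemma (their Proposition~1), whose conclusion combined with AM--GM yields $(a+b)^2\ge 4/\vol(G)$ where $a=\max f$, $b=-\min f$.

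The genuine gap in your argument is exactly where you flagged it. The step functions $c\mapsto D^{+}(c)V_{+}(c)$ and $c\mapsto D^{-}(c)V_{-}(c)$ can jump past one another without ever being equal, and the only thresholds at which you have the crucial additive bound $D^{+}(c)+D^{-}(c)\le D$ are the finitely many $f$-values realized on $\Pi$ --- values which, incidentally, need not be monotone along $\Pi$, so your ``adjacent thresholds $t_s>t_{s+1}$'' are not well-defined as written. Your fallback idea, that a large jump of $f$ across the gap contributes its square to $N_f$, does not obviously close things: that edge contribution is already accounted for in the inequality $N_f\ge N_c^{+}+N_c^{-}$ (it is precisely the slack there when an edge straddles the level $c$), so invoking it again risks double counting, and in any case you have not shown how to recover the exact constant $4$ rather than something like $4/((D{+}1)\vol(G))$. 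As it stands this is a sketch with a real hole, not a proof.

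If you want a short completion that bypasses the balancing entirely, note that from $-b\le f(x)\le a$ one has $(a-f(x))(f(x)+b)\ge 0$, i.e.\ $f(x)^2\le (a-b)f(x)+ab$; multiplying by $d(x)$, summing, and using $\sum_x f(x)d(x)=0$ and $\sum_x f(x)^2 d(x)=1$ gives $ab\ge 1/\vol(G)$, whence $(a+b)^2\ge 4ab\ge 4/\vol(G)$. Together with the single Cauchy--Schwarz estimate along the geodesic this yields $\lambda_1\ge (a+b)^2/D\ge 4/(D\,\vol(G))$ directly.
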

The above lower bound is in fact asymptotically best possible (see further discussions later in Remark \ref{rem:sharp}). Second, we examine the minimal value of $\lambda_1$ over all connected graphs on $n$ vertices.

\begin{theorem} \label{min54}
The minimum normalized Laplacian spectral gap $\alpha(n)$, defined by
\[
 \alpha(n)=\min\{\lambda_1(G): G \mbox{ is a simple, connected graph on $n$ vertices} \}
\]
satisfies
\[
\alpha(n) \sim \frac{54}{n^3}.
\]
\end{theorem}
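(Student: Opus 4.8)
The plan is to establish both an upper bound $\alpha(n) \le (1+o(1))\tfrac{54}{n^3}$ and a matching lower bound $\alpha(n) \ge (1-o(1))\tfrac{54}{n^3}$.

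\emph{Upper bound.} First I would exhibit an explicit family of graphs, the ``double kite'' graphs announced in the abstract, and show their spectral gap is $(1+o(1))\tfrac{54}{n^3}$. A kite is a clique joined to the end of a path; a double kite has two cliques of size $\sim cn$ joined by a path of length $\sim (1-2c)n$ for an optimal constant $c$. I would take the antisymmetric test function $f$ that is constant on each clique (values $\pm a$), interpolates roughly linearly along the path, and satisfies the balance condition $\sum_u f(u)d(u) = 0$ by symmetry. Plugging into the Rayleigh quotient in the displayed variational characterization of $\lambda_1$, the numerator $\sum_{u\sim v}(f(u)-f(v))^2$ only picks up contributions from the path edges (the cliques contribute nothing since $f$ is constant there), giving roughly $\tfrac{(2a)^2}{(1-2c)n}$; the denominator $\sum_v f(v)^2 d(v)$ is dominated by the two cliques, each contributing $\sim a^2 (cn)^2$. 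Optimizing the resulting expression $\tfrac{1}{c^2(1-2c)n^3}$ over $c$ gives $c = 1/3$, yielding the constant $\tfrac{4}{(1/9)(1/3)} = 108$ — so one must be careful with the factors of $2$; getting the precise $54$ requires the sharp linear interpolation and a careful accounting, and then one must argue this Rayleigh quotient value is actually the eigenvalue up to $(1+o(1))$, not merely an upper bound, which can be done by also lower-bounding $\lambda_1$ for this specific graph (or by a perturbation/interlacing argument).

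\emph{Lower bound.} This is the main obstacle. I must show \emph{every} connected graph on $n$ vertices has $\lambda_1 \ge (1-o(1))\tfrac{54}{n^3}$. Theorem~\ref{fanImprovement} gives $\lambda_1 \ge \tfrac{4}{D\cdot\vol(G)}$, which is useful only when $D\cdot\vol(G)$ is small; since $\vol(G)\ge 2(n-1)$ always and we need the product below $\sim \tfrac{1}{54}n^3$, this handles graphs that are simultaneously sparse and of small diameter, but fails for dense graphs (large $\vol$) and for near-path graphs (large $D$). The strategy would be a case analysis on the structure of a hypothetical minimizer $G$: using the harmonic eigenfunction $f$ and ordering vertices by $f$-value, one shows that to make the Rayleigh quotient small, $f$ must be nearly constant on large vertex sets (otherwise the numerator is too big), and these large level sets, being connected subgraphs, force either high volume concentrated where $|f|$ is large (helping the denominator but then some cut edge in a sparse ``bridge'' region dominates the numerator) — a Cheeger-type / isoperimetric argument pinning down that the extremal configuration is two heavy blobs joined by a light path. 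Concretely, I would partition $V$ by the sign and magnitude of $f$, bound the numerator below by the contribution of edges crossing between consecutive level sets (using that $G$ is connected so each cut is nonempty), bound the denominator above by $\max_v f(v)^2 \cdot \vol(G)$, and show via a careful optimization (essentially a continuous relaxation: minimize $\int (f')^2 / \int f^2 \rho$ over mass distributions $\rho$ with total mass $n$ on an interval, subject to the balance constraint and to $\rho$ being realizable by a simple graph, the key point being $\rho(x) \le (\text{number of vertices near }x)$ so a clique of $k$ vertices contributes volume at most $k^2$) that the optimum is $\tfrac{54}{n^3}$. The realizability constraint $d(u) \le (\text{size of the level set containing }u)$, i.e. that degrees are bounded by local vertex counts because the graph is simple, is exactly what rules out doing better than the double kite and what makes the constant come out to $54$; making this rigorous — converting the discrete extremal problem into the clean continuous one and controlling the error terms — is where the real work lies.
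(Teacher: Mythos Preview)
Your upper bound sketch is right in spirit and matches the paper's, with one unnecessary worry: you do \emph{not} need to argue the Rayleigh quotient value equals $\lambda_1$ of the double kite. Since $\alpha(n) \le \lambda_1(DK) \le R(f)$ for any admissible test function $f$, an upper bound on the Rayleigh quotient is already an upper bound on $\alpha(n)$. (Your factor-of-two slip resolves once you note the numerator is $\sum_{\text{path edges}} (2a/L)^2 = (2a)^2/L$ and the denominator is $2a^2(cn)^2(1+o(1))$, giving $2/(c^2(1-2c)n^3)$, optimized at $c=1/3$ to $54/n^3$.)

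The lower bound is where your proposal has a genuine gap. You correctly observe that Theorem~\ref{fanImprovement} alone is too weak, and you gesture at the right three-way decomposition (two heavy blobs and a path), but two concrete ingredients that make the paper's argument work are absent from your plan:

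\textbf{(i)} The paper does not use $4/(D\cdot\vol(G))$ but the sharper Lemma~\ref{lowerBoundLemma}: $\lambda_1 \ge 2/\bigl(\mathrm{dist}(u,v)\sqrt{\vol_P\cdot\vol_N}\bigr)$, obtained by Cauchy--Schwarz along a shortest $u$--$v$ path together with a Lagrange-multiplier optimization (Proposition~\ref{optimization}) that lower-bounds $(f(v)-f(u))^2$ under the constraints $\sum f d = 0$, $\sum f^2 d = 1$. This separates the positive and negative volumes and replaces $D$ by the specific distance $\mathrm{dist}(u,v)$, which is what allows the clean three-parameter optimization.

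\textbf{(ii)} More importantly, your ``realizability constraint $d(u)\le$ (size of the level set)'' is false as stated: a vertex in $P$ can have many neighbors in $N$, so $\vol_P$ is \emph{not} automatically bounded by $|P|^2$. The paper closes this with Lemma~\ref{NPedges}: in a $\lambda_1$-minimizing graph, every edge between $P$ and $N$ is a bridge (deleting a non-bridge $P$--$N$ edge strictly decreases the Rayleigh quotient), hence $e(N,P)\le n-1$. Only then does one get $\vol_P \le |P\setminus S|^2 + O(n)$ and similarly for $\vol_N$. Writing $|P\setminus S|=\alpha_1 n$, $|N\setminus S|=\alpha_2 n$, $|S|=\alpha_3 n$, the lemma yields $\lambda_1 \ge (1+o(1))\,2/(\alpha_1\alpha_2\alpha_3 n^3)$, and AM--GM at $\alpha_1=\alpha_2=\alpha_3=1/3$ gives $54/n^3$. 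Your continuous-relaxation idea is a reasonable heuristic for why $54$ is the right constant, but without the bridge lemma there is no rigorous control of cross-edges, and that is the step your outline does not supply.
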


As an immediate consequence of Theorem \ref{min54}, we confirm a conjecture of Aldous and Fill on {\it relaxation time}. The {relaxation time} $\tau$ of a random walk on a (connected) graph $G$ with probability transition matrix $P=T^{-1}A$ is defined as
\[
\tau(G) = \frac{1}{1-\rho_{n-1}},
\]
where $\rho_1\leq \dots \leq \rho_{n-1} < \rho_n=1$ denote the eigenvalues of $P$. A central problem in the study of random walks is to determine the {\it mixing time}, the required number of steps in the random walk guaranteeing closeness to the stationary distribution. As seen throughout the literature \cite{aldous2002reversible, chung1997spectral, levin2017markov}, the eigenvalue $\rho_{n-1}$ and hence the relaxation time is the primary term controlling mixing time. Therefore, relaxation time is directly associated with the rate of convergence for a random walk. 
At least as early as 1994, Aldous and Fill \cite[Problem 6.13, p.~216]{aldous2002reversible} conjectured the following concerning relaxation time:
\begin{conjecture}[Aldous and Fill, c.~1994] \label{conj:af} 
The maximum relaxation time $\beta(n)$, defined by
\[
\beta(n)=\max\{\tau(G): G \mbox{ is a simple, connected graph on $n$ vertices} \},
\]
satisfies
\[
\beta(n) \sim \frac{n^3}{54}.
\]
\end{conjecture}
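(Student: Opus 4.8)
The plan is to deduce the conjecture from Theorem \ref{min54} via the exact identity $\tau(G) = 1/\lambda_1(G)$, and then transfer the asymptotics. First I would record the standard spectral correspondence between the transition matrix and the normalized Laplacian: since
\[
P = T^{-1}A = T^{-1/2}\bigl(T^{-1/2}AT^{-1/2}\bigr)T^{1/2},
\]
the matrix $P$ is similar to $T^{-1/2}AT^{-1/2} = I - \mathcal{L}$, so the eigenvalues of $P$ are exactly $1-\lambda_i$ for $i=0,\dots,n-1$. Arranged in increasing order these are $\rho_1 = 1-\lambda_{n-1} \le \dots \le \rho_{n-1} = 1-\lambda_1 \le \rho_n = 1-\lambda_0 = 1$. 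Hence, directly from the Aldous--Fill definition (which uses $\rho_{n-1}$ itself, not $\max(|\rho_1|,|\rho_{n-1}|)$),
\[
\tau(G) = \frac{1}{1-\rho_{n-1}} = \frac{1}{\lambda_1(G)}.
\]
This uses only that $G$ is connected, so $\lambda_1 > 0$ and the denominator is nonzero, and that $T$ is invertible, so the similarity is valid; both hold under the stated hypotheses.

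Next, since there are only finitely many simple connected graphs on $n$ vertices and $x \mapsto 1/x$ is decreasing on $(0,\infty)$, taking maxima gives
\[
\beta(n) = \max_G \tau(G) = \max_G \frac{1}{\lambda_1(G)} = \frac{1}{\min_G \lambda_1(G)} = \frac{1}{\alpha(n)}.
\]
Theorem \ref{min54} states $\alpha(n) = (1+o(1))\tfrac{54}{n^3}$, and since $\alpha(n) > 0$ for every $n$, inverting preserves the asymptotic relation, yielding $\beta(n) = (1+o(1))\tfrac{n^3}{54}$, i.e. $\beta(n) \sim \tfrac{n^3}{54}$ — exactly Conjecture \ref{conj:af}. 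Moreover the extremal graphs for $\beta(n)$ are precisely those for $\alpha(n)$, so the double kite construction witnessing Theorem \ref{min54} also witnesses the matching lower bound $\beta(n) \ge (1-o(1))\tfrac{n^3}{54}$.

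Because the deduction above is purely formal, the entire difficulty sits inside Theorem \ref{min54}, whose proof I would split into two halves. For the upper bound on $\alpha(n)$ (equivalently the lower bound on $\beta(n)$) I would exhibit the double kite graph, solve for its harmonic eigenfunction along the long path joining the two cliques, and optimize the clique sizes; the continuous optimization that falls out is what produces the constant $54$. The matching lower bound on $\alpha(n)$ — showing no connected $n$-vertex graph has $\lambda_1$ asymptotically below $54/n^3$ — is the main obstacle: one must argue that a near-extremal graph is forced to look like a double kite, presumably by first using that small $\lambda_1$ forces large diameter (controlled through Theorem \ref{fanImprovement} together with a bound on $\vol(G)$) and then running a variational analysis of the harmonic eigenfunction along a near-diametral path to pin down the extremal configuration. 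I expect the delicate point to be making the ``resembles a double kite'' step quantitative enough that the sharp constant $54$, and not merely the order $n^{-3}$, is recovered.
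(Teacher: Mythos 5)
Your proposal is correct and follows the same route as the paper: the authors likewise observe that $T^{-1/2}\mathcal{L}T^{1/2}=I-T^{-1}A$, so the eigenvalues of $P$ are $1-\lambda_i$, giving $\tau(G)=1/\lambda_1(G)$ and hence $\beta(n)=1/\alpha(n)$, after which Theorem \ref{min54} immediately yields $\beta(n)\sim n^3/54$. Your sketch of how Theorem \ref{min54} itself is established also matches the paper's strategy (double kite upper bound plus the diameter--volume lower bound lemma), so there is nothing to add.
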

In \cite{aldous2002reversible}, Aldous and Fill showed that $\beta(n)$ is bounded above by $(1+o(1))\frac{2n^3}{27}$. In general, Conjecture \ref{conj:af} fits into a body of work addressing extremal problems for random walk parameters. For example, Brightwell and Winkler \cite{brightwell1990maximum} found the maximum hitting time between two vertices over all $n$-vertex graphs and determined the extremal graphs are lollipop graphs. Relatedly, Mazo considered maximum and minimum mean hitting time \cite{mazo1982some}. Furthermore, Feige obtained sharp upper bounds on cover time  \cite{feige1995tight, feige1996collecting}, and Coppersmith, Tetali, and Winkler found the maximum commute time \cite{coppersmith1993collisions}.

It is easy to see that $T^{-1/2}\mathcal{L} T^{1/2}=I-T^{-1}A$, and hence $\lambda_i$ is an eigenvalue of $\mathcal{L}$ if and only if $1-\rho_{i}$ is an eigenvalue of $T^{-1}A$. Consequently, the relaxation time of a graph may equivalently be written as $\tau=1/{\lambda_1}$ and so Theorem \ref{min54} confirms Conjecture \ref{conj:af}. 

\begin{corollary}
The maximum relaxation time $\beta(n)$ for the random walk on a simple, connected graph on $n$ vertices satisfies $\beta(n) \sim n^3/54$. The extremal value $\beta(n)$ is achieved asymptotically by a double kite graph, $DK(\frac{n}{3}, \frac{n}{3})$.
\end{corollary}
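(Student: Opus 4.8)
The statement is a direct consequence of Theorem \ref{min54} together with the spectral correspondence already recorded in the text, so the plan is short. First I would use the identity $T^{-1/2}\mathcal{L}T^{1/2} = I - T^{-1}A = I - P$, which shows that the eigenvalues of the transition matrix $P$ are exactly $\{1-\lambda_i\}_{i=0}^{n-1}$; in particular $\rho_{n-1} = 1 - \lambda_1$ and hence $\tau(G) = (1-\rho_{n-1})^{-1} = \lambda_1(G)^{-1}$ for every connected $G$. Since $x\mapsto 1/x$ is decreasing on the positive reals, maximizing $\tau$ over all connected $G$ on $n$ vertices is the same as minimizing $\lambda_1$, so $\beta(n) = \max_G \tau(G) = (\min_G \lambda_1(G))^{-1} = \alpha(n)^{-1}$. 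Theorem \ref{min54} gives $\alpha(n) = (1+o(1))\,54/n^3$, and since this tends to $0$ its reciprocal satisfies $\beta(n) = \alpha(n)^{-1} = (1+o(1))\,n^3/54$, which is the first assertion.

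For the second assertion I would transport the extremal construction used in the proof of Theorem \ref{min54}: that proof exhibits the double kite graph $DK(\tfrac{n}{3},\tfrac{n}{3})$ — two cliques on $n/3$ vertices joined by a path on the remaining $n/3$ vertices — as a graph with $\lambda_1 = (1+o(1))\,54/n^3$, hence $\tau(DK(\tfrac{n}{3},\tfrac{n}{3})) = (1+o(1))\,n^3/54 \sim \beta(n)$. The quick heuristic for why this is the right graph uses the variational characterization of $\lambda_1$ with the test function $f$ equal to $+1$ on one clique, $-1$ on the other, and interpolated linearly along the path: the Dirichlet sum along a path of $\sim b$ edges is $\sim 4/b$, while the normalizing sum $\sum_v f(v)^2 d(v)$ is $\sim 2a^2$ since the two cliques dominate the volume, giving a Rayleigh quotient $\sim 2/(a^2 b)$; minimizing this over $2a+b=n$ forces $a=b=n/3$ and the value $54/n^3$.

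The Corollary itself contains no real obstacle — all the difficulty lives in Theorem \ref{min54}. There the "easy" direction is the double kite computation just sketched, which yields $\alpha(n) \le (1+o(1))\,54/n^3$; the substantive half is the matching lower bound $\lambda_1(G) \ge (1+o(1))\,54/n^3$ for every connected $G$ on $n$ vertices, i.e.\ showing that no graph beats the double kite. The only point needing a word of care in deducing the Corollary is the passage through asymptotic notation: because $\alpha(n)\to 0$, the relation $\alpha(n)\sim 54/n^3$ is equivalent to $\alpha(n)^{-1}\sim n^3/54$, so no quantitative loss is incurred when inverting.
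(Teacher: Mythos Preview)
Your proposal is correct and mirrors the paper's own argument: the paper also derives the corollary directly from Theorem~\ref{min54} via the similarity $T^{-1/2}\mathcal{L}T^{1/2}=I-P$, which gives $\tau=1/\lambda_1$ and hence $\beta(n)=\alpha(n)^{-1}$, with the double kite furnishing the asymptotically extremal example. Your additional remarks on the Rayleigh-quotient heuristic and the inversion of asymptotics are accurate elaborations but not needed beyond what the paper records.
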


The double kite graph can be defined as follows:

\begin{definition}
A {\it double kite graph}, denoted $DK(r,s)$, consists of two copies of the $r$-vertex complete graph $K_r$ and a path connecting them, $p_0,p_1,\dots,p_s,p_{s+1}$, where $p_0$ is a selected vertex from one copy of $K_r$ and $p_{s+1}$ is a selected vertex from the other copy of $K_r$. See Figure $\ref{kitePic}$ for an illustration. 
\end{definition}

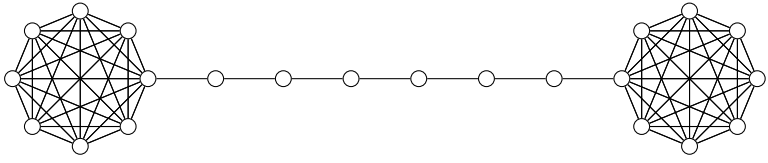
\begin{figure}[h]
\[\begin{tikzpicture}[scale=0.9]
	\vertex (c1) at (45:1){};
	\vertex (c2) at (90:1){};
	\vertex (c3) at (135:1){};
	\vertex (c4) at (180:1){};
	\vertex(c5) at (225:1){};
	\vertex(c6) at (270:1){};
	\vertex(c7) at (315:1){};
	\vertex(c8) at (360:1){};
	\vertex (d2) at (3,0){};
	\vertex (d3) at (4,0){};
	\vertex (d1) at (2,0){};
	\vertex (d4) at (5,0){};
	\vertex (d5) at (6,0){};
	\vertex (d6) at (7,0){};
	\vertex (e1) at (8,0){};
	\vertex (e2) at (9.7071, 0.7071){};	
	\vertex (e3) at (9,1){};
	\vertex (e4) at (8.2929,0.7071){};
	\vertex (e5) at (8.2929,-0.7071){};
	\vertex (e6) at (9,-1){};
	\vertex (e7) at (9.7071,-0.7071){};
	\vertex (e8) at (10,0){};
	\path 
		(c1) edge (c2) (c1) edge (c3) (c1) edge (c4) (c1) edge (c5) (c1) edge (c6) (c1) edge (c7)
		(c1) edge (c8) (c2) edge (c1) (c2) edge (c3) (c2) edge (c4) (c2) edge (c5) (c2) edge (c6)
		(c2) edge (c7) (c2) edge (c8) (c3) edge (c2) (c3) edge (c1) (c3) edge (c4) (c3) edge (c5)
		(c3) edge (c6) (c3) edge (c7) (c3) edge (c8) (c4) edge (c2) (c4) edge (c3) (c4) edge (c1)
		(c4) edge (c5) (c4) edge (c6) (c4) edge (c7) (c4) edge (c8) (c5) edge (c2) (c5) edge (c3)
		(c5) edge (c4) (c5) edge (c1) (c5) edge (c6) (c5) edge (c7) (c5) edge (c8) (c6) edge (c2)
		(c6) edge (c3) (c6) edge (c4) (c6) edge (c5) (c6) edge (c1) (c6) edge (c7) (c6) edge (c8) 
		(c7) edge (c2) (c7) edge (c3) (c7) edge (c4) (c7) edge (c5) (c7) edge (c6) (c7) edge (c1)
		(c7) edge (c8) (c8) edge (c2) (c8) edge (c3) (c8) edge (c4) (c8) edge (c5) (c8) edge (c6)
		(c8) edge (c1) (c8) edge (c7) (c8) edge (d1)
		
		(d1) edge (d2) (d2) edge (d3) (d3) edge (d4) (d4) edge (d5) (d5) edge (d6) 
		
		(e1) edge (e2) (e1) edge (e3) (e1) edge (e4) (e1) edge (e5) (e1) edge (e6) (e1) edge (e7)
		(e1) edge (e8) (e2) edge (e1) (e2) edge (e3) (e2) edge (e4) (e2) edge (e5) (e2) edge (e6)
		(e2) edge (e7) (e2) edge (e8) (e3) edge (e2) (e3) edge (e1) (e3) edge (e4) (e3) edge (e5)
		(e3) edge (e6) (e3) edge (e7) (e3) edge (e8) (e4) edge (e2) (e4) edge (e3) (e4) edge (e1)
		(e4) edge (e5) (e4) edge (e6) (e4) edge (e7) (e4) edge (e8) (e5) edge (e2) (e5) edge (e3)
		(e5) edge (e4) (e5) edge (e1) (e5) edge (e6) (e5) edge (e7) (e5) edge (e8) (e6) edge (e2)
		(e6) edge (e3) (e6) edge (e4) (e6) edge (e5) (e6) edge (e1) (e6) edge (e7) (e6) edge (e8) 
		(e7) edge (e2) (e7) edge (e3) (e7) edge (e4) (e7) edge (e5) (e7) edge (e6) (e7) edge (e1)
		(e7) edge (e8) (e8) edge (e2) (e8) edge (e3) (e8) edge (e4) (e8) edge (e5) (e8) edge (e6)
		(e8) edge (e1) (e8) edge (e7) (d6) edge (e1)
		
	;
\end{tikzpicture}
\]
\caption{The double kite graph $DK(8,6)$.} \label{kitePic}
\end{figure}

\begin{remark} 
In \cite{aldous2002reversible}, Aldous and Fill call $DK(r,s)$ the {\it barbell graph}. The specific cases of $DK(\frac{n}{2},0)$ as well as $DK(\frac{n}{3},\frac{n}{3})$ have also both been commonly referred to as the barbell graph (e.g., see \cite{ghosh2008minimizing} and \cite{wilf1989editor} respectively). 
\end{remark}

\begin{remark}
Landau and Odlyzko also consider the construction $DK(\frac{n}{3},\frac{n}{3})$ to show that the $n^3$ order of magnitude implied by their bound (Theorem \ref{thm:land}) is best possible. Applying their bound to this construction yields $\lambda_1 \geq (1+o(1))\frac{9}{n^3}$, while we show, $\lambda_1 \sim \frac{54}{n^3}$.
\end{remark}

\begin{remark} \label{rem:sharp}
We note that the bound in Theorem \ref{fanImprovement} is asymptotically tight for $DK(\frac{n}{3},\frac{n}{3})$, yielding $\lambda_1 \geq (1+o(1))\frac{54}{n^3}$. In general, however, the lower bound $4/D\cdot \vol(G)$ may be off by orders of magnitude. For example, applying the bound to the $d$-dimensional hypercube graph on $n=2^d$ vertices yields $\lambda_1 \geq \tfrac{4}{n \cdot \log_2^2(n)}$ yet $\lambda_1=\tfrac{2}{\log_2(n)}$.
On the other hand, in Section \ref{sec:fanImprov} we show Theorem \ref{fanImprovement} is sharp in a strong sense: for a wide range of $D$ and $\vol(G)$ there is an infinite sequence of graphs for which it is tight asymptotically, including the multiplicative constant.
\end{remark}

In addition to its interpretation in the random walk setting, Theorem \ref{min54} is also part of the literature surrounding extremal spectral graph theory, where one optimizes a spectral invariant over a fixed family of graphs. Such problems were first formalized by Brualdi and Solheid \cite{brualdi1986spectral} and since then have attracted attention from many researchers. Rather than give a broad survey of such work, we briefly mention a few results directly relevant to ours. For the spectral gap of the adjacency matrix, Stanic \cite{stanic2013graphs} proved some lower bounds for the spectral gap of the adjacency matrix, and conjectured that double kite graphs minimize the adjacency spectral gap. For the combinatorial Laplacian, Fallat and Kirkland \cite{fallat1998extremizing} find the combinatorial Laplacian algebraic connectivity minimizing graphs over all $n$-vertex trees with given diameter. Brand, Guiduli, and Imrich \cite{brand2007characterization} minimized $\lambda_1$ of the Laplacian over all $3$-regular graphs, and characterized the extremal graphs. For the general case, \cite{biyikouglu2012graphs} showed that the $n$-vertex graphs minimizing algebraic connectivity must consist of a chain of cliques. 

The remainder of the paper is structured as follows: in Section \ref{sec:fanImprov}, we prove a lemma from which Theorem \ref{fanImprovement} follows as a corollary and show Theorem \ref{fanImprovement} is sharp for a wide range of values of $D$ and $\mathrm{vol}(G)$. In Section \ref{sec:mainThm}, we apply this lemma, among others, to also prove Theorem \ref{min54}. In Section \ref{sec:conc}, we conclude by mentioning related open problems.

\section{Proof of Theorem \ref{fanImprovement}} \label{sec:fanImprov}

In this section, we establish the lemma from which Theorem \ref{fanImprovement} will follow as a corollary. To establish this lemma, we first require the solution to a related optimization problem.

\begin{proposition}\label{optimization}
Fix $(d_1,\ldots, d_n) \in \mathbb{N}^n$. Let $(f_1,\ldots, f_n)$ be a sequence minimizing the quantity
\[
(f_n-f_1)^2
\]
subject to the constraints
\begin{align}
\sum_{i=1}^n f_id_i = 0, \label{const:1}\\
\sum_{i=1}^n f_i^2d_i = 1\label{const:2},
\end{align}
and 
\[
f_1 \leq f_k \leq f_n
\]
for all $k$.
Then for all $k$ either $f_1 = f_k$ or $f_n = f_k$.
\end{proposition}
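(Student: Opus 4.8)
The plan is to argue by contradiction. Suppose the minimizer $f$ has a ``middle'' coordinate, i.e. $M:=\{k:f_1<f_k<f_n\}\neq\emptyset$ (so in particular $n\geq 3$), and manufacture a strictly better feasible point. First record the basic facts: since $\sum_i d_i f_i=0$ with every $d_i\geq 1$ while $\sum_i d_i f_i^2=1\neq 0$, we must have $a:=f_1<0<f_n=:b$. Write $S_1=\{i:f_i=a\}$, $S_n=\{i:f_i=b\}$, and $x_1=\sum_{i\in S_1}d_i$, $x_n=\sum_{i\in S_n}d_i$ (both $\geq 1$), $W=\sum_i d_i$, and $x_M=W-x_1-x_n>0$. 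Note that the objective $(f_n-f_1)^2$ only sees the two extreme coordinates, so any improving move must shift the extreme values; the crux is to set up a perturbation that nudges a single interior coordinate while letting \emph{both} endpoints float so as to preserve the two equality constraints.

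\emph{Step 1: the interior values are all $0$.} Fix $k_0\in M$ and consider, for small $|t|$, the configuration $f^{(t)}$ obtained by setting coordinate $k_0$ to $f_{k_0}+t$, every coordinate of $S_1$ to $a(t)$, and every coordinate of $S_n$ to $b(t)$ (all other coordinates unchanged), where $a(t),b(t)$ are chosen so that \eqref{const:1}--\eqref{const:2} continue to hold. The Jacobian of the constraint map in the variables $(a,b)$ has determinant $2x_1x_n(b-a)\neq 0$, so the implicit function theorem produces smooth $a(t),b(t)$ near $t=0$ with $a(0)=a$, $b(0)=b$; for $|t|$ small $f^{(t)}$ also respects the ordering constraint, since the perturbed coordinate and the other middle coordinates stay strictly between $a(t)$ and $b(t)$. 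Minimality of $f$ then makes $t\mapsto (b(t)-a(t))^2$ have a local minimum at $0$, and since $b(0)-a(0)>0$ this forces $a'(0)=b'(0)$. Differentiating the two constraints at $t=0$ and solving the resulting $2\times2$ linear system for $a'(0),b'(0)$ gives
\[
b'(0)-a'(0)=\frac{d_{k_0}}{b-a}\left(\frac{b-f_{k_0}}{x_1}-\frac{f_{k_0}-a}{x_n}\right),
\]
so $b'(0)-a'(0)=0$ forces $f_{k_0}=(ax_1+bx_n)/(x_1+x_n)$. The right-hand side does not depend on $k_0$, so all middle coordinates equal a common value $c$; substituting $\sum_i d_i f_i = ax_1+bx_n+cx_M=0$ together with $c(x_1+x_n)=ax_1+bx_n$ yields $cW=0$, i.e. $c=0$.

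\emph{Step 2: an explicit improvement.} Now $f$ equals $a<0$ on $S_1$, $0$ on $M$, $b>0$ on $S_n$, with $ax_1+bx_n=0$ and $a^2x_1+b^2x_n=1$, so $f_n-f_1=b-a=\sqrt{(x_1+x_n)/(x_1x_n)}$. Define $g$ by assigning $b'=\sqrt{(x_1+x_M)/(x_nW)}$ to every coordinate of $S_n$ and $a'=-\sqrt{x_n/((x_1+x_M)W)}$ to every coordinate of $S_1\cup M$. A direct check shows $g$ satisfies \eqref{const:1}--\eqref{const:2}, that $a'<0<b'$, that $g$ respects the ordering constraint (it takes only the two values $a'<b'$, with coordinate $1$ at $a'$ and coordinate $n$ at $b'$), and that $g_n-g_1=b'-a'=\sqrt{W/((x_1+x_M)x_n)}$. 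Since $(x_1+x_M)(x_1+x_n)-Wx_1=x_Mx_n>0$, rearranging gives $(g_n-g_1)^2<(f_n-f_1)^2$, contradicting minimality of $f$. Hence $M=\emptyset$, i.e. $f_k\in\{f_1,f_n\}$ for every $k$.

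The main obstacle is Step 1: because the objective is blind to all interior coordinates, first-order stationarity has to be forced through a perturbation that moves an interior coordinate and both endpoints at once, and one must extract from it not merely that the interior values coincide (which a plain Lagrange-multiplier computation also delivers) but that they equal $0$ — and it is precisely this that makes the one-line improvement in Step 2 work. As a consistency check, the Bhatia--Davis inequality $\sum_i d_i(b-f_i)(f_i-a)\geq 0$ rearranges to $-abW\geq 1$, with equality exactly when $f$ is two-valued on $\{a,b\}$; but this alone does not locate an improving direction, since realizing a better two-point configuration requires an honestly available bipartition of the coordinates, which is what the choice $S_1\cup M$ versus $S_n$ in Step 2 supplies.
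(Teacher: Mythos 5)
Your proof is correct, and while it arrives at the same structural facts as the paper's argument, it gets there by a genuinely different route. The paper drops the ordering constraint, writes down the Lagrangian, shows the unique interior critical point has all middle coordinates equal to $0$ with objective value $\tfrac{1}{d_1}+\tfrac{1}{d_n}$, exhibits a feasible point beating that value, concludes the minimum must lie on the boundary where some $f_k$ coincides with $f_1$ or $f_n$, and then \emph{iterates}, merging one variable at a time until only two remain. You instead perturb a single interior coordinate together with both extreme values along a curve supplied by the implicit function theorem --- a move that automatically stays feasible for the ordering constraint, so you never have to separate interior from boundary cases --- extract $f_{k_0}=(ax_1+bx_n)/(x_1+x_n)$ and hence $c=0$ from first-order stationarity, and then eliminate the entire middle block in one stroke with the explicit two-valued competitor on $S_1\cup M$ versus $S_n$, with no induction. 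Your version is arguably cleaner on the point the paper treats lightly: the Lagrangian computation tacitly assumes the constrained minimizer, if not on the ordering boundary, is a critical point of the unconstrained Lagrangian, whereas you need only that $t\mapsto(b(t)-a(t))^2$ has a local minimum at $t=0$ along an explicitly feasible path. The computations all check out: the Jacobian determinant $2x_1x_n(b-a)$ is nonzero since $a<0<b$; solving the differentiated constraints does give $b'(0)-a'(0)=\tfrac{d_{k_0}}{b-a}\bigl(\tfrac{b-f_{k_0}}{x_1}-\tfrac{f_{k_0}-a}{x_n}\bigr)$; the competitor $g$ satisfies both equality constraints and the ordering; and the final comparison reduces exactly to $x_Mx_n>0$, which holds because $M\neq\emptyset$ and the weights are positive integers.
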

\begin{proof}
First we consider the optimization problem without the constraint that $f(1) \leq f(k) \leq f(n)$. In this case, consider the Lagrangian 
\[
(f_n-f_1)^2-\alpha\left(\sum_{i=1}^n f_id_i \right) - \beta \left(\sum_{i=1}^n f_i^2 d_i-1\right).
\]
We show that either we are on the boundary where there exists a $k$ such that $f(1) = f(k)$ or $f(n) = f(k)$, or the critical point of this Lagrangian maximizes the objective function $(f_n - f_1)^2$, and so the minimum must occur on the boundary. A critical point of the Lagrangian occurs when
\begin{align}
2\left(f_n-f_1\right)-\alpha d_n - 2\beta f_n d_n &= 0 \label{lag:1} \\
-2(f_n-f_1)-\alpha d_1 - 2\beta f_1 d_1 &= 0 \label{lag:2}\\
\alpha d_i + 2 \beta f_i d_i &= 0, \label{lag:3}
\end{align}
for $i=2,\dots, n-1$. If $\beta=0$, then from Eq.~$(\ref{lag:3})$, $\alpha=0$, in which case subtracting Eq.~$(\ref{lag:1})$ from Eq.~$(\ref{lag:2})$ yields $f_1=f_n$. But from the definitions of $f$ and $d$ and Eq.~$(\ref{const:1})$, it is clear $f_n>0$ and $f_1 <0$. So $\beta \not = 0$ and $f_i=-\frac{\alpha}{2 \beta}$ for $i=2,\dots,n-1$. Applying this fact and rewriting Eqs.~$(\ref{const:1})$ and Eq.~$(\ref{const:2})$ yields
\begin{align}
f_1d_1+f_nd_n &= \frac{\alpha}{2\beta}\sum_{i=2}^{n-1} d_i, \label{eq:firstlast} \\
f_1^2 d_1 + f_n^2 d_n &= 1- \frac{\alpha^2}{4 \beta^2} \sum_{i=2}^{n-1} d_i. \label{eq:square}
\end{align} 

Adding Eqs.~$(\ref{lag:1})$ and $(\ref{lag:2})$, then applying Eq.~$(\ref{eq:firstlast})$ yields
\[
\alpha \sum_{i=1}^n d_i =0,
\]
from which we can see that $\alpha=0$. Now, Eqs.~$(\ref{lag:3}),(\ref{eq:firstlast}),(\ref{eq:square})$ tell us $f_i =0$  for $i=2,\dots,n-2$, and
\begin{align*}
f_1 d_1 + f_n d_n &= 0, \\
f_1^2 d_1 + f_n^2 d_n &=1. 
\end{align*}

Rewriting the former equation above, we get $f_1 = -c \cdot d_n$ and $f_n=c \cdot d_1$ for $ c\coloneqq {f_n}/{d_1}$. Plugging this into the latter, we find
\[
c^2=\frac{1}{d_1d_n(d_1+d_n)}.
\]
Finally, we have
\[
(f_n-f_1)^2=c^2 (d_1+d_n)^2 = \frac{1}{d_1}+\frac{1}{d_n}.
\]
We claim that this is the maximum value of $(f_n-f_1)^2$ subject to the constraints. To see this, note that letting 
\[
f_1 = f_2 = -\sqrt{\frac{d_n}{(d_1+d_2)(d_1+d_2+d_n)}}, \quad \quad f_n = \sqrt{\frac{d_1+d_2}{d_n(d_1+d_2+d_n)}},
\]
satisfies all of the constraints and gives 
\[
(f_n-f_1)^2 = \frac{1}{d_1+d_2} + \frac{1}{d_n},
\]
which is smaller than $\frac{1}{d_1} + \frac{1}{d_n}$ since $d_2\geq 1$. Therefore, the only critical point of the Lagrangian interior to the boundary is a maximum, and thus the minimum must occur when there is a $k$ such that $f_1 = f_k$ or $f_n = f_k$. In this case, we may substitute for $f_k$, and we are left with a similar optimization problem in $n-1$ variables, where we have eliminated the variable $f_k$ and replaced $d_1$ with $d_1+d_k$ if $f_1=f_k$ or $d_n$ by $d_n + d_k$ if $f_n = f_k$. We may use this argument repeatedly to show that the minimum must occur on the boundary until there are only $2$ variables remaining. At this point, the objective function is constant subject to the constraints, and we are done.

\end{proof}

We now prove the lemma from which Theorem \ref{fanImprovement} will follow. Let $G$ be a connected graph with normalized Laplacian eigenvalues $\lambda_0 \leq \lambda_1 \leq  \cdots \leq \lambda_{n-1}$, and let $f$ be a harmonic eigenvector for $\lambda_1$. Once $f$ is fixed, let $u$ and $v$ be vertices corresponding to minimum and maximum entries of $f$ respectively. That is, for all $z\in V(G)$ we have $f(u) \leq f(z) \leq f(v)$. Further, let 
\begin{align*}
\vol_P &= \sum_{z: f(z) \geq 0} d(z),\\
\vol_N &= \sum_{z: f(z) < 0} d(z).
\end{align*}

\begin{lemma}\label{lowerBoundLemma}
Let $G$ be a connected graph with $f$ a harmonic eigenvector for $\lambda_1$ of its normalized Laplacian. Let $u$ and $v$ be vertices which minimize and maximize $f$ respectively, and let $\vol_P$ and $\vol_N$ be defined as above. Then 
\[
\lambda_1 \geq \frac{2}{\mathrm{dist}(u,v) \sqrt{\vol_P \cdot \vol_N}}.
\]
\end{lemma}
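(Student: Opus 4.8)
The plan is to bound $\lambda_1$ from below by cleverly choosing a test function related to $f$ and applying the variational (Rayleigh quotient) characterization of $\lambda_1$, while exploiting Proposition \ref{optimization} to control how the values of $f$ can be distributed. First I would recall that $\lambda_1$ equals the Rayleigh quotient of its own harmonic eigenfunction $f$, so
\[
\lambda_1 = \frac{\sum_{x \sim y}(f(x)-f(y))^2}{\sum_x f(x)^2 d(x)}.
\]
Normalizing so that $\sum_x f(x)^2 d(x) = 1$ (and noting $\sum_x f(x)d(x)=0$ automatically since $f$ is harmonic), the denominator is $1$ and I must lower bound the numerator $\sum_{x\sim y}(f(x)-f(y))^2$. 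The key observation is that along any shortest path $u = x_0, x_1, \ldots, x_k = v$ of length $k = \mathrm{dist}(u,v)$ from the minimizing vertex to the maximizing vertex, the Cauchy--Schwarz inequality gives
\[
\sum_{x\sim y}(f(x)-f(y))^2 \;\geq\; \sum_{i=0}^{k-1}(f(x_{i+1})-f(x_i))^2 \;\geq\; \frac{1}{k}\left(\sum_{i=0}^{k-1}(f(x_{i+1})-f(x_i))\right)^2 = \frac{(f(v)-f(u))^2}{\mathrm{dist}(u,v)}.
\]
So it remains to show $(f(v)-f(u))^2 \geq \dfrac{2}{\sqrt{\vol_P \cdot \vol_N}}$ under the normalization $\sum f(x)^2 d(x)=1$, $\sum f(x)d(x)=0$.

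For that last inequality I would invoke Proposition \ref{optimization}. The quantity $(f(v)-f(u))^2$ — where $f(u)$ and $f(v)$ are the min and max of the actual eigenfunction — is at least the \emph{minimum} of $(f_n - f_1)^2$ over all sequences satisfying constraints (\ref{const:1}), (\ref{const:2}), and $f_1 \le f_k \le f_n$, taken with the degree sequence of $G$ (with $d_1 = d(u)$, $d_n = d(v)$). By the Proposition, that minimum is attained by a two-valued function: every $f_k$ equals either $f_1$ or $f_n$. If we let $V_N$ be the total degree of the vertices taking the smaller value and $V_P$ the total degree of those taking the larger value, then the two constraints become $f_1 V_N + f_n V_P = 0$ and $f_1^2 V_N + f_n^2 V_P = 1$, which (exactly as in the two-variable endgame of the Proposition's proof) force
\[
(f_n - f_1)^2 = \frac{1}{V_N} + \frac{1}{V_P} \;\geq\; \frac{2}{\sqrt{V_N V_P}}
\]
by AM--GM. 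The final step is to argue that the relevant split $(V_N, V_P)$ can be taken to be exactly $(\vol_N, \vol_P)$: the vertices where the optimal two-valued function takes its negative value contribute $\vol_N$, the non-negative ones contribute $\vol_P$, matching the definitions; since $f(v)-f(u)$ for the true eigenfunction is at least this optimal value, and $\tfrac{1}{x}+\tfrac{1}{y}\geq \tfrac{2}{\sqrt{xy}}$, we get $(f(v)-f(u))^2 \geq 2/\sqrt{\vol_P\cdot\vol_N}$. Combining with the path bound yields $\lambda_1 \geq \dfrac{2}{\mathrm{dist}(u,v)\sqrt{\vol_P\cdot\vol_N}}$.

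The main obstacle I anticipate is the last bookkeeping step: justifying that the optimal two-valued configuration from Proposition \ref{optimization} can be identified with the partition of $V(G)$ into the sign classes of $f$, so that $V_N = \vol_N$ and $V_P = \vol_P$. The Proposition only says the minimizer is two-valued with some split of the degrees; one must check that, because we only need a lower bound, we are free to compare $(f(v)-f(u))^2$ against the value $\tfrac{1}{\vol_N}+\tfrac{1}{\vol_P}$ corresponding to the \emph{particular} feasible point induced by the sign pattern of the true eigenfunction (rather than the globally optimal split), or alternatively to verify that the optimal split is monotone and consistent with a choice of threshold at $0$. Either way this is the delicate point; the rest is Cauchy--Schwarz and AM--GM.
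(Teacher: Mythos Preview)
Your outline matches the paper's proof step for step: Rayleigh quotient, restrict to a shortest $u$--$v$ path, Cauchy--Schwarz to get $(f(v)-f(u))^2/\mathrm{dist}(u,v)$, normalize, and then bound $(f(v)-f(u))^2$ via Proposition~\ref{optimization}. The paper simply asserts that the lower bound from Proposition~\ref{optimization} may be taken with the split $(N,P)$ determined by the sign of $f$, and then solves for $c_1,c_2$ explicitly; you correctly isolate this as the only nontrivial point.

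However, neither of your proposed resolutions works as written. The first is the wrong way round: the value $\tfrac{1}{\vol_N}+\tfrac{1}{\vol_P}$ attached to a \emph{particular} feasible two-valued function is, a priori, only an \emph{upper} bound on the minimum of the constrained problem, hence not automatically a lower bound for the true $(f(v)-f(u))^2$. The second suggestion (that the optimal split is ``monotone at threshold $0$'') has no reason to hold: the optimal split balances volumes and need not coincide with the sign classes of $f$. What actually closes the gap is a two-line direct estimate that bypasses Proposition~\ref{optimization} for this step. With $A:=\sum_{x\in P} f(x)d(x)=-\sum_{x\in N} f(x)d(x)$, one has
\[
1=\sum_x f(x)^2 d(x)\le f(v)\!\sum_{x\in P} f(x)d(x)+f(u)\!\sum_{x\in N} f(x)d(x)=A\bigl(f(v)-f(u)\bigr),
\]
while $A\le f(v)\,\vol_P$ and $A\le (-f(u))\,\vol_N$ give $f(v)-f(u)\ge A\bigl(\tfrac{1}{\vol_P}+\tfrac{1}{\vol_N}\bigr)$; multiplying these two inequalities yields $(f(v)-f(u))^2\ge \tfrac{1}{\vol_P}+\tfrac{1}{\vol_N}\ge \tfrac{2}{\sqrt{\vol_P\vol_N}}$. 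With this patch your proof is complete and coincides with the paper's argument.
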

\begin{proof}
Let $f$ be a harmonic eigenvector for $\lambda_1$, and let $u$ and $v$ be vertices which minimize and maximize $f$ respectively, so $f(u) \leq f(z) \leq f(v)$ for all $z\in V(G)$. Let $S$ be a shortest path from $u$ to $v$. Then, 
\begin{align*}
\lambda_1 &= \frac{\sum_{x\sim y} (f(x)-f(y))^2}{\sum_x (f(x))^2d(x)}\\
&\geq \frac{\sum_{xy\in S} (f(x)-f(y))^2}{\sum_x (f(x))^2d(x)} \\
& \geq \frac{\frac{1}{|S|} (f(u) - f(v))^2}{\sum_x (f(x))^2d(x)},
\end{align*}
where the last inequality is by Cauchy-Schwarz. Now, since $f$ is a harmonic eigenvector, we have 
\[
\sum_x f(x)d(x) = 0.
\]
We may without loss of generality scale $f$ so that 
\[
\sum_x (f(x))^2d(x) = 1.
\]
By Proposition \ref{optimization}, we have that the quantity $(f(u)-f(v))^2$ is bounded below by $(c_2-c_1)^2$ where $c_1$ and $c_2$ satisfy
\[
\sum_{x\in N} c_1d(x) + \sum_{x\in P} c_2d(x) = 0,
\]
and
\[
\sum_{x\in N} c_1^2 d(x) + \sum_{x\in P} c_2^2 d(x) = 1.
\]
If $c_1$ and $c_2$ satisfy this system, then we have 
\[
c_1 = - \sqrt{\frac{\vol_P}{\vol_N^2 + \vol_P\vol_N}}, \quad \quad c_2 = \sqrt{\frac{\vol_N}{\vol_P^2 + \vol_P\vol_N}}.
\]
Thus we have 
\[
\lambda_1 \geq \frac{1}{\mathrm{dist}(u,v)} \left(\sqrt{\frac{\vol_N}{\vol_P^2 + \vol_P \vol_N}} + \sqrt{\frac{\vol_P}{\vol_N^2 + \vol_P \vol_N}}\right)^2.
\]
Using calculus, one can see that 
\[
 \left(\sqrt{\frac{\vol_N}{\vol_P^2 + \vol_P\vol_N}} + \sqrt{\frac{\vol_P}{\vol_N^2 + \vol_P\vol_N}}\right)^2 \geq \frac{2}{\sqrt{\vol_P\vol_N}}.
\]

\end{proof}

As a corollary of this, we can now prove Theorem \ref{fanImprovement}.
\begin{proof}[Proof of Theorem \ref{fanImprovement}]
Note that $\vol(G) = \vol_P + \vol_N$, and so the AM-GM inequality gives us 
\[
\frac{\vol(G)}{2} \geq \sqrt{\vol_P\cdot \vol_N}.
\]
Now, if $D$ is the diameter of $G$, we have by Lemma \ref{lowerBoundLemma} that 
\[
\lambda_1 \geq \frac{2}{\mathrm{dist}(u,v) \sqrt{\vol_P\vol_N}} \geq \frac{2}{D \sqrt{\vol_P\vol_N}} \geq \frac{4}{D\cdot \vol(G)}.
\]
\end{proof}

Next we give a family of constructions showing that Theorem \ref{fanImprovement} is sharp. \begin{proposition}\label{prop:construction}
Let $D$ and $d$ be fixed, and let $n-D+1$ be divisible by $4$. Let $H_1$ and $H_2$ be $d$-regular graphs on $\frac{n-D+1}{2}$ vertices, and let $H$ be the graph obtained by joining $H_1$ and $H_2$ by a path of length $D$. Then 
\[
\lambda_1(H) \leq \frac{4}{Dd(n-D)}.
\]
\end{proposition}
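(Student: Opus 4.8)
The plan is to bound $\lambda_1(H)$ from above by exhibiting an explicit admissible test function in the variational characterization of $\lambda_1$ recalled in the introduction, i.e. a function $f$ with $\sum_x f(x)d(x)=0$ whose Rayleigh quotient $\sum_{x\sim y}(f(x)-f(y))^2\big/\sum_x f(x)^2 d(x)$ is at most $\tfrac{4}{Dd(n-D)}$. Write $m=\tfrac{n-D+1}{2}$ for the common order of $H_1$ and $H_2$, and let $a\in V(H_1)$, $b\in V(H_2)$ be the two vertices joined by the connecting path $a=p_0,p_1,\dots,p_D=b$; the internal vertices $p_1,\dots,p_{D-1}$ are the $D-1$ new vertices, and indeed $2m+(D-1)=n$. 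The relevant feature of the degree sequence of $H$ is this: vertices of $V(H_1)\setminus\{a\}$ and of $V(H_2)\setminus\{b\}$ have degree $d$; the attachment vertices $a$ and $b$ have degree $d+1$; and $p_1,\dots,p_{D-1}$ have degree $2$.

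I would take $f\equiv -1$ on $V(H_1)$, $f\equiv +1$ on $V(H_2)$, and $f$ linear along the path, $f(p_j)=-1+\tfrac{2j}{D}$. The first step is to verify admissibility: the $V(H_1)$- and $V(H_2)$-parts of $\sum_x f(x)d(x)$ equal $-(md+1)$ and $+(md+1)$ (the $+1$ being exactly the extra unit of degree at $a$, resp.\ $b$), while the internal path vertices contribute $\sum_{j=1}^{D-1}2f(p_j)=0$ since $f(p_j)=-f(p_{D-j})$; hence $\sum_x f(x)d(x)=0$. (If one does not want to guess this symmetric $f$, running the same computation with $f\equiv c_1$ on $H_1$ and $f\equiv c_2$ on $H_2$ yields $(c_1+c_2)(md+D)=0$, forcing $c_2=-c_1$, so nothing is lost by the symmetric choice.)

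The second step is the Rayleigh quotient estimate. The numerator is exact: every edge inside $H_1$ or inside $H_2$ contributes $0$, and each of the $D$ path edges contributes $(2/D)^2$, so $\sum_{x\sim y}(f(x)-f(y))^2=D\cdot(2/D)^2=4/D$. For the denominator it suffices to discard the (nonnegative) path contribution and use $\sum_x f(x)^2 d(x)\ge \sum_{x\in V(H_1)\cup V(H_2)} f(x)^2 d(x)=2(md+1)>2md=d(n-D+1)\ge d(n-D)$. Combining the two estimates gives $\lambda_1(H)\le \tfrac{4/D}{d(n-D)}=\tfrac{4}{Dd(n-D)}$, as claimed. (If a sharper bound were wanted one could evaluate the path's contribution to the denominator exactly via $\sum_{j=1}^{D-1}(2j-D)^2=\tfrac13 D(D-1)(D-2)$, but this is not needed; note also that taking $d=m-1$ and $H_1=H_2=K_m$ recovers the double kite $DK(m,D-1)$, which is the case relevant to Theorem \ref{min54}.)

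There is no genuine obstacle here; the two points that require care are (i) keeping the degree sequence straight — in particular that $a$ and $b$ have degree $d+1$, which is precisely what makes the positive and negative parts of $\sum_x f(x)d(x)$ cancel — and (ii) the divisibility hypothesis on $n-D+1$, which is used only to guarantee the existence of $d$-regular graphs $H_1,H_2$ on $m$ vertices (an issue when $d$ is odd) and plays no role in the Rayleigh-quotient computation itself.
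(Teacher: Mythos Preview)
Your proof is correct and follows essentially the same approach as the paper: you use the same test function (up to an irrelevant global sign), compute the numerator exactly as $4/D$, and bound the denominator below by $d(n-D)$ via the $H_1\cup H_2$ contribution. Your write-up is in fact more careful than the paper's in verifying the orthogonality condition and tracking the degree sequence, but the argument is the same.
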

\begin{proof}
Label the vertices on the path between $H_1$ and $H_2$ as $p_0, p_1,\ldots, p_D$, where the terminal vertices $p_0$ and $p_D$ belong to $H_1$ and $H_2$ respectively. Define $f: V(H) \to \mathbb{R}$ by 
\[
f(u) =
\begin{cases}
1 & \mbox{if } u\in H_1, \\
-1 & \mbox{if } u\in H_2, \\
1-\frac{2i}{D} &\mbox{if } u =p_i.
\end{cases}
\]

One may check that $\sum_{u} f(u)d(u) = 0$, and hence 
\begin{align*}
\lambda_1 \leq \frac{\sum_{u \sim v} (f(u)-f(v))^2}{\sum_{v}f(v)^2 d(v)}  \leq \frac{\sum_{u \sim v} (f(u)-f(v))^2}{(n-D)d} &= \frac{\sum_{i=1}^D (f(p_i) - f(p_{i-1}))^2}{(n-D)d} \\ &= \frac{D\left(\frac{2}{D}\right)^2}{(n-D)d}. \end{align*}
\end{proof}

Now, given $H$ we have that $\mathrm{vol}(H) = (n-D+1)d + 2D$ and the diameter of $H$ is at most $D +\mathrm{diam}(H_1) + \mathrm{diam(H_2)}$. Therefore, as long as we have $d(n-D+1)+2D \sim d(n-D)$ and $\mathrm{diam}(H_1) + \mathrm{diam}(H_2) = o(D)$, then the lower bound in Theorem \ref{fanImprovement} is asymptotically tight for $\lambda_1(H)$ as $n$ goes to infinity. Since we may choose $d$-regular graphs with diameter $O(\log n)$, for any $D$ and $V$ satisfying $D \gg \log n$ and $n\ll V \leq \frac{n^2}{2}$, there is a sequence of graphs with diameter asymptotic to $D$ and volume asymptotic to $V$ for which the bound in Theorem \ref{fanImprovement} is asymptotically sharp.

\section{Proof of Theorem \ref{min54}} \label{sec:mainThm}

We first prove an upper bound on $\alpha(n)$, which is straightforward by considering the double kite graph.

\begin{claim} \label{upperBound}
\[
\alpha(n) \leq (1+o(1))\frac{54}{n^3}.
\]
\end{claim}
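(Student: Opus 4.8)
The plan is to exhibit a specific graph on $n$ vertices — the double kite $DK(r,s)$ with $r \approx n/3$ and $s \approx n/3$ — together with an explicit test function $f$, and then bound $\lambda_1$ from above via the Rayleigh quotient
\[
\lambda_1 \leq \frac{\sum_{u\sim v}(f(u)-f(v))^2}{\sum_v f(v)^2 d(v)}.
\]
First I would set $r = s = n/3$ (up to rounding, which only affects lower-order terms) and label the connecting path $p_0,p_1,\dots,p_{s+1}$, with $p_0$ in the first clique $K_r$ and $p_{s+1}$ in the second. The natural choice of test function is one that is constant (say $+1$) on the bulk of the first clique, constant ($-1$) on the bulk of the second clique, and interpolates roughly linearly along the path; one must tune the exact values on the clique vertices and near the path endpoints so that the balance condition $\sum_v f(v)d(v)=0$ holds exactly. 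By the symmetry of $DK(r,r)$ one can take $f$ antisymmetric under the involution swapping the two kites, which makes the balance condition automatic.

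The key computation is then to evaluate numerator and denominator asymptotically. The denominator is dominated by the two cliques: each clique contributes $\Theta(r)$ vertices each of degree $\approx r$ with $f$-value $\approx \pm 1$, giving $\sum_v f(v)^2 d(v) \sim 2 \cdot r \cdot r = 2r^2 = 2(n/3)^2$, while the path contributes only $\Theta(s) = \Theta(n)$ which is lower order. The numerator: edges inside a clique contribute essentially nothing if $f$ is (nearly) constant there — here is where the tuning matters, since $f$ cannot be exactly $\pm 1$ on the cliques once we enforce the moment condition, but the deviation is $O(1/n)$ per vertex, so the within-clique contribution is $o(1)$ — and the path edges contribute $\sum_{i}(f(p_{i+1})-f(p_i))^2$. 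With a linear interpolation of total drop $\approx 2$ over $\approx s$ steps, each path edge contributes $\approx (2/s)^2$, for a total of $s \cdot (2/s)^2 = 4/s = 12/n$. Hence
\[
\lambda_1(DK(n/3,n/3)) \lesssim \frac{12/n}{2(n/3)^2} = \frac{12/n}{2n^2/9} = \frac{54}{n^3},
\]
and one optimizes over the split between clique size and path length to confirm that $r=s=n/3$ is the right asymptotic balance (making the cliques larger shrinks the path term but the optimum of the product is at the $1:1$ ratio of $2r$ to $s$, i.e. $r/s \to 1$ after accounting for the factor coming from two cliques versus one path — this is exactly the optimization that produces the constant $54$).

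I expect the main obstacle to be the bookkeeping needed to make the balance constraint $\sum_v f(v)d(v)=0$ hold exactly while keeping the numerator's within-clique contribution negligible: one must verify that the necessary perturbation of $f$ away from the idealized $\pm 1$ plateau values is small enough (of size $O(1/n)$, contributing $O(1/n)$ to the numerator after squaring and multiplying by the $O(n^2)$ clique degrees — so actually one should check this more carefully, as a perturbation of size $\varepsilon$ on clique vertices creates within-clique edge differences of size $O(\varepsilon)$ across $O(r^2)$ edges, giving $O(\varepsilon^2 r^2)$, which forces $\varepsilon = o(1/r^{3/2})$; fortunately the antisymmetric choice of $f$ sidesteps this entirely since then no perturbation is needed and $f$ is genuinely constant on each clique's non-path vertices). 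Using the antisymmetry, the only real work is the clean asymptotic evaluation of the two sums and the one-variable optimization over the clique-to-path ratio, both of which are routine.
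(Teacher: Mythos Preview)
Your proposal is correct and is essentially the paper's approach: the paper applies its Proposition~\ref{prop:construction} (the Rayleigh-quotient bound for two $d$-regular graphs joined by a path, using exactly the $\pm 1$-plateaus-with-linear-interpolation test function you describe) to $DK(n/3,n/3)$, obtaining $\lambda_1 \le 4/(Dd(n-D)) \sim 54/n^3$ in one line. The antisymmetry observation you make is precisely how the balance condition $\sum_v f(v)d(v)=0$ is secured, so the perturbation bookkeeping you worried about never arises.
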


\begin{proof}
Consider $G=DK(\frac{n}{3},\frac{n}{3})$. By Proposition \ref{prop:construction} we have $\lambda_1(G) \leq (1+o(1))\frac{54}{n^3}$.
\end{proof}

It remains to prove that $\alpha(n) \geq (1+o(1))\frac{54}{n^3}$. To do so, we will use Lemma \ref{lowerBoundLemma} from Section \ref{sec:fanImprov}, as well as an additional lemma below that establishes a key property of the extremal graphs. Henceforth, we assume $G$ achieves $\alpha(n)$ with harmonic eigenvector $f$ satisfying
\[
\lambda_1 = \frac{\sum_{x\sim y}(f(x)-f(y))^2}{\sum_x (f(x))^2d(x)}.
\]
Let 
\begin{align*}
P &= \{z\in V(G): f(z) \geq 0\},\\
N &=\{z\in V(G): f(z) < 0\}.
\end{align*}
Further, let $u$ and $v$ satisfy $f(u) \leq f(z) \leq f(v)$ for all $z\in V(G)$ and let $S$ be a shortest path from $u$ to $v$.

\begin{lemma}\label{NPedges}
If $G$ achieves $\alpha(n)$, then the number of edges with one endpoint in $N$ and the other in $P$ satisfies 
\[
1\leq e(N,P) \leq n-1.
\]
\end{lemma}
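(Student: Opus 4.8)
The plan is to establish the two inequalities separately; only the upper bound uses the extremality of $G$. For the lower bound $e(N,P)\ge 1$: the constraint $\sum_z f(z)d(z)=0$ with $f\not\equiv 0$ forces $f$ to be strictly positive somewhere (at $v$) and strictly negative somewhere (at $u$), so both $P$ and $N$ are nonempty; since $G$ is connected, at least one edge joins them.

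For the upper bound I would argue by contradiction, assuming $e(N,P)\ge n$. The first step is purely combinatorial: I claim the cut $E(N,P)$ must contain an edge that is not a bridge of $G$. Otherwise all $k:=e(N,P)$ of its edges are bridges, and deleting them one at a time from $G$ increases the number of connected components by exactly one each time (a bridge of $G$ remains a bridge after deleting other bridges), so $G-E(N,P)$ would have $k+1\ge n+1$ components, which is impossible on $n$ vertices. So fix a non-bridge edge $xy\in E(N,P)$ with $x\in N$, $y\in P$, and set $G':=G-xy$; this is a simple connected graph on $n$ vertices, hence $\lambda_1(G')\ge\alpha(n)=\lambda_1(G)$.

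The second step is to contradict this inequality by using $f$ (scaled so that $\sum_z f(z)^2 d(z)=1$) as a test vector in the variational bound $\lambda_1(G')\le \frac{\sum_{a\sim b\text{ in }G'}(f(a)-f(b))^2}{\sum_a f(a)^2 d_{G'}(a)-\vol(G')^{-1}\left(\sum_a f(a)d_{G'}(a)\right)^2}$, which holds because $f$ is non-constant. Deleting $xy$ drops the numerator to $\lambda_1(G)-(f(x)-f(y))^2$, and, since only $d(x)$ and $d(y)$ decrease by one, drops the denominator to $1-f(x)^2-f(y)^2-\frac{(f(x)+f(y))^2}{\vol(G)-2}$. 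Writing $a:=-f(x)>0$ and $b:=f(y)\ge 0$, the desired strict inequality $\lambda_1(G')<\lambda_1(G)$ is, after clearing the positive denominator, equivalent to $(a+b)^2>\lambda_1(G)\big(a^2+b^2+\tfrac{(a-b)^2}{\vol(G)-2}\big)$. The right-hand side is at most $\lambda_1(G)\,\tfrac{\vol(G)-1}{\vol(G)-2}\,(a^2+b^2)\le \lambda_1(G)\,\tfrac{\vol(G)-1}{\vol(G)-2}\,(a+b)^2$, and since $\lambda_1(G)=\alpha(n)\le(1+o(1))\tfrac{54}{n^3}<\tfrac12$ while $\tfrac{\vol(G)-1}{\vol(G)-2}\le 2$, it is strictly less than $(a+b)^2>0$. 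This contradiction finishes the upper bound.

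The main obstacle is the combinatorial first step of the contradiction, namely guaranteeing a non-bridge edge inside the cut $E(N,P)$ once $e(N,P)\ge n$; the Rayleigh computation afterwards is routine once one sees that removing an $N$–$P$ edge shrinks the Dirichlet energy by a full $(f(x)-f(y))^2\ge f(x)^2+f(y)^2$ while shrinking the weighted variance by only about $f(x)^2+f(y)^2$, a net relative loss because $\lambda_1$ is minuscule. One subtlety worth flagging rather than estimating: the denominator $1-f(x)^2-f(y)^2-\tfrac{(f(x)+f(y))^2}{\vol(G)-2}$ equals precisely the weighted variance of $f$ on $G'$, hence is strictly positive since $f$ is non-constant, so clearing it preserves the direction of the inequality.
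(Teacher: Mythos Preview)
Your proof is correct and follows essentially the same approach as the paper's: both show via a Rayleigh-quotient computation that deleting any non-bridge edge across the $N$--$P$ cut would strictly decrease $\lambda_1$, contradicting extremality, and then bound the number of bridges by $n-1$. The only cosmetic differences are that the paper argues directly that every $N$--$P$ edge is a bridge (whereas you phrase it contrapositively, assuming $e(N,P)\ge n$ to find a non-bridge cut edge), and the paper recenters $f$ explicitly by a constant $c$ while you use the equivalent variance form of the Rayleigh quotient; the resulting estimates and the use of $\alpha(n)=o(1)$ to close the inequality are identical.
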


\begin{proof}
Since $f$ is a harmonic eigenvector, we have $\sum_x f(x)d(x) = 0$ and so $f(u) < 0 < f(v)$. Therefore, there must be an edge in $S$ that has one endpoint in $N$ and the other in $P$. To see the upper bound, we claim that any edge with one endpoint in $N$ and the other in $S$ must be a bridge. To see this, let 
\[
a = \sum_{x\sim y} (f(x) - f(y))^2,
\]
and 
\[
b = \sum_x (f(x))^2d(x),
\]
so that $\lambda_1 = \frac{a}{b}$. Now let $e=wz$ be an edge with one endpoint in $N$ and the other in $P$, and let $G' = G\setminus \{e\}$. Furthermore, let $d'(x)$ be the degree sequence of $G'$, and let $f'(x) = f(x) + c$ where $c$ is chosen so that $\sum_x f'(x) d'(x) = 0$.  So
\begin{eqnarray*}
  0 = \sum_x \left( f(x) + c \right) d'(x) & = & \sum_x \left( f(x) + c \right) d(x) - f(w) - c - f(z) - c \\
  & = & \sum_x f(x) d(x) + c \sum_x d(x) - f(z) - f(w) - 2c \\
  & = & c \sum_x d(x) - 2c - f(z) - f(w).
\end{eqnarray*}
We get
\begin{equation}\label{c_expression}
c = \frac{f(z) + f(w)}{\sum_x d(x) - 2}.
\end{equation}

If $R_G(f)$ is the Rayleigh quotient of graph $G$ with harmonic eigenfunction $f$, then define $c_1, c_2$ so that
\[ R_{G'}(f') = \frac{a -c_1 }{b - c_2}, \]
where $c_1, c_2 > 0$.  It is easily seen that
\[ \frac{a-c_1}{b-c_2} < \frac{a}{b} \]
if and only if
\[ \lambda_1 = \frac{a}{b} < \frac{c_1}{c_2}.\]

By definition of $f'$ and $G'$, we have $c_1 = (f(w) - f(z))^2 > f(w)^2 + f(z)^2$, since $f(w)f(z) < 0$.  Also,
\begin{eqnarray*}
  c_2 & = & \sum_x f(x)^2 d(x) - \sum_x f'(x)^2d'(x) \\
  & = & \sum_x f(x)^2 d(x) - \left(\sum_x (f(x)+c)^2 d(x) - (f(z) + c)^2 - (f(w) + c)^2 \right) \\
  & = & f(z)^2 + f(w)^2 + 2c (f(z) + f(w)) - c^2 \left(\sum_x d(x) - 2\right).
\end{eqnarray*}
Using Expression~\ref{c_expression} we get
\[ c_2 = f(z)^2 + f(w)^2 + \frac{(f(z) + f(w))^2}{\sum_x d(x) - 2} \leq f(z)^2 + f(w)^2 + \frac{f(z)^2 + f(w)^2}{\sum_x d(x) - 2}, \]
again using the fact that $f(w)f(z) < 0$.  Combining these, we get
\[
\frac{c_1}{c_2} > \frac{f(z)^2 + f(w)^2}{f(z)^2 + f(w)^2 + \frac{f(z)^2 + f(w)^2}{\sum_x d(x) - 2}} = \frac{1}{1 + (\sum_x d(x) - 2)^{-1}}.
\]
If $G'$ is connected, we have the (very weak) bound $\sum_x d(x) - 2 > 2n - 4$, so for any $\varepsilon > 0$ if $n$ is large enough we have
$\frac{c_1}{c_2} > 1 - \varepsilon > \lambda_1$.  Therefore deleting this edge would decrease $\lambda_1$.  By minimality we conclude that
$e$ is a bridge. Now, given a connected graph, take any connected spanning tree. Since any edge not on this spanning tree cannot disconnect the graph, there can be at most $n-1$ bridges, giving us the upper bound.

\end{proof}

We are now in a position to prove a lower bound on $\alpha(n)$,  which completes our proof of Theorem \ref{min54}.

\begin{claim}
\[
\alpha(n)\geq (1+o(1))\frac{54}{n^3}.
\]
\end{claim}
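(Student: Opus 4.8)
The plan is to combine Lemma~\ref{lowerBoundLemma} with Lemma~\ref{NPedges} and a careful accounting of where the ``mass'' $\sum_x f(x)^2 d(x)$ sits in the extremal graph $G$. By Lemma~\ref{lowerBoundLemma}, $\lambda_1 \geq \frac{2}{\mathrm{dist}(u,v)\sqrt{\vol_P\cdot\vol_N}}$, so it suffices to show that in the extremal graph $\mathrm{dist}(u,v)\sqrt{\vol_P\cdot\vol_N} \leq (1+o(1))\frac{n^3}{27}$, i.e.\ essentially that $\mathrm{dist}(u,v) \lesssim n/3$ and $\vol_P, \vol_N \lesssim (2/3)\binom{n/3}{2}\cdot 2 \approx \tfrac{2}{9}n^2\cdot\tfrac12$, matching the double-kite parameters. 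So first I would record the target arithmetic: for $DK(n/3,n/3)$ we have $\vol_P \approx \vol_N \approx \tfrac13 n \cdot \tfrac13 n = \tfrac{n^2}{9}$ (each clique has $\sim n/3$ vertices of degree $\sim n/3$) and $\mathrm{dist}(u,v)\approx n/3$, giving $\mathrm{dist}(u,v)\sqrt{\vol_P\vol_N}\approx \tfrac{n}{3}\cdot\tfrac{n^2}{9}=\tfrac{n^3}{27}$, hence $\lambda_1 \geq 2/(n^3/27) = 54/n^3$. The whole game is to prove the extremal graph cannot do better on the product $\mathrm{dist}(u,v)\sqrt{\vol_P\vol_N}$.

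The key structural input is Lemma~\ref{NPedges}: the edges between $N$ and $P$ are bridges, and there are at most $n-1$ of them; more usefully, removing all $N$--$P$ edges splits $G$ into components, each lying entirely in $N$ or entirely in $P$. I would argue that because these are bridges, the graph looks like a ``tree of blobs'': a collection of subgraphs $C_1,\dots,C_m$ (the components of $G$ minus the $N$--$P$ bridges) connected in a tree pattern by single edges, with the sign of $f$ constant on each $C_j$. Let $n_j = |C_j|$, so $\sum n_j = n$. Since $f$ is harmonic, within each $C_j$ the values of $f$ cannot oscillate too wildly relative to the boundary; in fact along the path $S$ from $u$ to $v$, the contribution $\sum_{xy\in S}(f(x)-f(y))^2$ is already controlled, and $f$ monotone-ish behavior lets me bound $\mathrm{dist}(u,v)$. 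The crucial trade-off to extract: a component $C_j$ with $n_j$ vertices contributes at most $\vol(C_j) \leq n_j(n_j-1) \leq n_j^2$ to $\vol_P$ or $\vol_N$, but if $C_j$ is ``used'' to contribute a lot of volume it cannot also be spread out to contribute a lot to $\mathrm{dist}(u,v)$. Optimizing the product of (total distance) against (volume on each side) under the constraint $\sum n_j = n$ should be a Lagrange-multiplier / convexity computation whose optimum is exactly: two cliques of size $n/3$ each and a path of length $n/3$ between them.

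Concretely, the steps in order: (1) set up the block structure from Lemma~\ref{NPedges}, writing $G$ as cliques-or-denser-blobs joined by bridges, with $f$ sign-constant on blocks; (2) show the harmonic/minimality condition forces each block to be ``efficient,'' i.e.\ we may assume each block inducing positive density is a clique (otherwise one can add edges or move vertices to decrease $\lambda_1$ — here I would invoke that adding an edge inside a block of constant sign only decreases the Rayleigh quotient numerator relative to denominator, or cite the chain-of-cliques phenomenon from \cite{biyikouglu2012graphs}); (3) reduce to the configuration of two cliques $K_a$, $K_b$ joined by a path of length $\ell$ with $2$-plus-$a+b+\ell \approx n$, compute $\vol_P \approx \binom{a}{2}\cdot\frac{2}{1}\approx a^2$-type quantities and $\mathrm{dist}(u,v) \approx \ell$; (4) optimize $\ell\sqrt{\vol_P\vol_N}$ over $a+b+\ell=n$, getting $a=b=\ell=n/3$ and the value $n^3/27$; (5) conclude $\lambda_1 \geq 2/(n^3/27)(1-o(1)) = (1+o(1))54/n^3$. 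The main obstacle I anticipate is step (2)–(3): rigorously ruling out that a cleverer non-clique block structure, or unequal cliques, or extra small blobs hanging off the path, could beat $n^3/27$ — this requires either a robust monotonicity argument for $\lambda_1$ under local graph modifications, or a direct but delicate optimization over all admissible block-tree shapes, and controlling the $o(1)$ errors (e.g.\ the path could itself be a chain of tiny cliques) so they genuinely vanish. Handling the interaction between $\mathrm{dist}(u,v)$ (which wants the connecting structure long) and $\vol_N,\vol_P$ (which want the end blocks large) simultaneously, rather than bounding each separately and losing constants, is the technical heart.
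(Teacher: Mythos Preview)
Your overall strategy is right --- use Lemma~\ref{lowerBoundLemma} and Lemma~\ref{NPedges} to bound $\mathrm{dist}(u,v)\sqrt{\vol_P\vol_N}$ and optimize --- but you are working far too hard, and your step~(2) has a real gap. The paper's argument needs \emph{no} structural reduction to cliques or blobs at all.

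Here is the missing idea. Partition $V(G)$ not by the bridge structure but simply into three pieces: $P' = P\setminus S$, $N' = N\setminus S$, and $S$ itself, with sizes $\alpha_1 n$, $\alpha_2 n$, $\alpha_3 n$ where $\alpha_1+\alpha_2+\alpha_3=1$. The elementary fact that any vertex off a shortest path has at most $3$ neighbors on it (and any vertex on $S$ at most $2$) gives
\[
\vol_P = 2e(G_P) + e(N,P) \leq |P'|^2 + 6|P'| + 2|S| + (n-1) = \alpha_1^2 n^2 + O(n),
\]
using Lemma~\ref{NPedges} only for the crude bound $e(N,P)\leq n-1$. Likewise $\vol_N \leq \alpha_2^2 n^2 + O(n)$. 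Plugging into Lemma~\ref{lowerBoundLemma} with $\mathrm{dist}(u,v)\leq |S|=\alpha_3 n$ gives
\[
\lambda_1 \geq (1+o(1))\frac{2}{\alpha_1\alpha_2\alpha_3 n^3},
\]
and AM--GM on $\alpha_1+\alpha_2+\alpha_3=1$ finishes with $\alpha_1=\alpha_2=\alpha_3=\tfrac13$.

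Your proposed step~(2) --- forcing each block to be a clique by arguing that adding an edge inside a sign-constant block decreases $\lambda_1$ --- does not work as stated. Adding an edge $xy$ changes numerator by $(f(x)-f(y))^2$ and denominator by $f(x)^2+f(y)^2$; for this to lower the Rayleigh quotient you need the ratio of these increments to be below $\lambda_1 = O(n^{-3})$, which is far stronger than merely $f(x)f(y)>0$. The point is that this entire line is unnecessary: you do not need to know the extremal graph looks like two cliques and a path, only that $\vol_P$ is at most roughly the square of the number of vertices in $P$ that are \emph{off} the shortest path.
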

\begin{proof}
Assume $G$ achieves $\alpha(n)$. Let $P' = P\setminus S$ and $N' = N\setminus S$, and let $|P'| = \alpha_1 n$, $|N'| = \alpha_2 n$, and $|S| = \alpha_3 n$. So $\alpha_1 + \alpha_2 + \alpha_3 = 1$. Now, since $S$ is a shortest path from $u$ to $v$, we have that any vertex in $V(G) \setminus S$ may have at most $3$ neighbors on $S$, and any vertex in $S$ may have at most $2$ neighbors in $S$. Letting $G_P$ and $G_N$ be the graphs induced by $P$ and $N$, respectively, note that 
\[
\vol_P = 2e(G_P) + e(N,P),
\]
and 
\[
\vol_N = 2e(G_N) + e(N,P).
\]
Putting these facts together, we have
\[
2e(G_P) \leq \sum_{z\in P} d(z) \leq |P'|^2 + 2e(P',S) + 2|S| \leq |P'|^2 + 6|P'| + 2|S| \leq \alpha_1^2n^2 + 8n.
\]
By Lemma \ref{NPedges} we have that $\vol_P \leq \alpha_1^2n^2 + 9n$. Similarly, $\vol_N \leq \alpha_2^2 n^2 + 9n$. By Lemma \ref{lowerBoundLemma}, we have 
\[
\lambda_1 \geq \frac{2}{|S| \sqrt{\vol_P \vol_N}} \geq (1+o(1))\frac{2}{\alpha_1\alpha_2\alpha_3 n^3}.
\]
Since $\alpha_1+\alpha_2 + \alpha_3 = 1$, this quantity is minimized when $\alpha_1 = \alpha_2= \alpha_3 = \frac{1}{3}$, and so 
\[
\lambda_1 \geq (1+o(1))\frac{54}{n^3}.
\]
\end{proof}

\section{Problems and remarks} \label{sec:conc}
In this paper, we proved an asymptotically sharp lower bound on the normalized Laplacian spectral gap of a connected graph. However, many  questions remain unanswered. Here we mention several related problems:

\begin{itemize}
\item Characterize the extremal graphs for which $\lambda_1 = \alpha(n)$. One might guess that all such extremal graphs are double kite graphs for large enough $n$, but we were not able to prove this.

\item Prove the corresponding theorem for the adjacency matrix: Stanic \cite{stanic2013graphs} conjectured that double kite graphs minimize the adjacency spectral gap.

\item Minimize $\lambda_1$ of the normalized Laplacian over the family of all regular graphs. Aldous and Fill \cite{aldous2002reversible} conjectured that the minimum is $(1+o(1))\frac{2\pi^2}{3n^2}$ and is achieved by a necklace graph. An affirmative answer to this conjecture was given for 3-regular graphs by \cite{brand2007characterization}, but the general case is still open. 

\end{itemize}

\bibliographystyle{siam}  
 \bibliography{lambda1Refs}  

\begin{thebibliography}{10}

\bibitem{aldous2002reversible}
{\sc D.~Aldous and J.~Fill}, {\em Reversible markov chains and random walks on
  graphs}, 2002.

\bibitem{biyikouglu2012graphs}
{\sc T.~B{\i}y{\i}ko{\u{g}}lu and J.~Leydold}, {\em Graphs of given order and
  size and minimum algebraic connectivity}, Linear Algebra and its
  Applications, 436 (2012), pp.~2067--2077.

\bibitem{brand2007characterization}
{\sc C.~Brand, B.~Guiduli, and W.~Imrich}, {\em Characterization of trivalent
  graphs with minimal eigenvalue gap}, Croatica Chemica Acta, 80 (2007),
  pp.~193--201.

\bibitem{brightwell1990maximum}
{\sc G.~Brightwell and P.~Winkler}, {\em Maximum hitting time for random walks
  on graphs}, Random Structures \& Algorithms, 1 (1990), pp.~263--276.

\bibitem{brualdi1986spectral}
{\sc R.~A. Brualdi and E.~S. Solheid}, {\em On the spectral radius of
  complementary acyclic matrices of zeros and ones}, SIAM Journal on Algebraic
  Discrete Methods, 7 (1986), pp.~265--272.

\bibitem{chung1997spectral}
{\sc F.~R. Chung}, {\em Spectral graph theory}, no.~92, American Mathematical
  Soc., 1997.

\bibitem{coppersmith1993collisions}
{\sc D.~Coppersmith, P.~Tetali, and P.~Winkler}, {\em Collisions among random
  walks on a graph}, SIAM Journal on Discrete Mathematics, 6 (1993),
  pp.~363--374.

\bibitem{fallat1998extremizing}
{\sc S.~Fallat and S.~Kirkland}, {\em Extremizing algebraic connectivity
  subject to graph theoretic constraints}, Electronic Journal of Linear
  Algebra, 3 (1998), p.~7.

\bibitem{feige1995tight}
{\sc U.~Feige}, {\em A tight upper bound on the cover time for random walks on
  graphs}, Random Structures \& Algorithms, 6 (1995), pp.~51--54.

\bibitem{feige1996collecting}
\leavevmode\vrule height 2pt depth -1.6pt width 23pt, {\em Collecting coupons
  on trees, and the cover time of random walks}, Computational Complexity, 6
  (1996), pp.~341--356.

\bibitem{ghosh2008minimizing}
{\sc A.~Ghosh, S.~Boyd, and A.~Saberi}, {\em Minimizing effective resistance of
  a graph}, SIAM review, 50 (2008), pp.~37--66.

\bibitem{Land81}
{\sc H.~Landau and A.~Odlyzko}, {\em Bounds for eigenvalues of certain
  stochastic matrices}, Linear Algebra and its Applications, 38 (1981),
  pp.~5--15.

\bibitem{levin2017markov}
{\sc D.~A. Levin and Y.~Peres}, {\em Markov chains and mixing times}, vol.~107,
  American Mathematical Soc., 2017.

\bibitem{lovasz1993random}
{\sc L.~Lov{\'a}sz}, {\em Random walks on graphs}, Combinatorics, Paul
  Erd{\H{o}}s is Eighty, 2 (1993), p.~4.

\bibitem{mazo1982some}
{\sc J.~Mazo}, {\em Some extremal markov chains}, The Bell System Technical
  Journal, 61 (1982), pp.~2065--2080.

\bibitem{stanic2013graphs}
{\sc Z.~Stanic}, {\em Graphs with small spectral gap}, Electronic Journal of
  Linear Algebra, 26 (2013), p.~28.

\bibitem{wilf1989editor}
{\sc H.~S. Wilf}, {\em The editor's corner: the white screen problem}, The
  American Mathematical Monthly, 96 (1989), pp.~704--707.

\end{thebibliography}

\end{document}